\documentclass[12pt]{amsart}
\usepackage{amsfonts,amssymb,color}
\usepackage[mathscr]{eucal}
\usepackage{amsmath, amsthm}
\usepackage{mathrsfs}
\usepackage{amsbsy}
\usepackage{dsfont}
\usepackage{bbm}
\usepackage{wasysym}
\usepackage[active]{srcltx} 
\hoffset = -10mm
 \voffset = -18mm
  \textheight 240mm
   \textwidth 150mm
\input xypic
\xyoption{all}
\begin{document}
\baselineskip = 5mm
\newcommand \ZZ {{\mathbb Z}} 
\newcommand \NN {{\mathbb N}} 
\newcommand \QQ {{\mathbb Q}} 
\newcommand \RR {{\mathbb R}} 
\newcommand \CC {{\mathbb C}} 
\newcommand \PR {{\mathbb P}} 
\newcommand \AF {{\mathbb A}} 
\newcommand \uno {{\mathbbm 1}}
\newcommand \Le {{\mathbbm L}}
\newcommand \bcA {{\mathscr A}}
\newcommand \bcB {{\mathscr B}}
\newcommand \bcC {{\mathscr C}}
\newcommand \bcD {{\mathscr D}}
\newcommand \bcE {{\mathscr E}}
\newcommand \bcF {{\mathscr F}}
\newcommand \bcM {{\mathscr M}}
\newcommand \bcS {{\mathscr S}}
\newcommand \bcT {{\mathscr T}}
\newcommand \bcU {{\mathscr U}}
\newcommand \bcX {{\mathscr X}}
\newcommand \bcY {{\mathscr Y}}
\newcommand \iHom {{\underline {\rm Hom}}}
\newcommand \Spec {{\rm {Spec}}}
\newcommand \Pic {{\rm {Pic}}}
\newcommand \Alb {{\rm {Alb}}}
\newcommand \Corr {{Corr}}
\newcommand \Sym {{\rm {Sym}}}
\newcommand \LSym {L{\rm {Sym}}}
\newcommand \Alt {{\rm {Alt}}}
\newcommand \cha {{\rm {char}}}
\newcommand \tr {{\rm {tr}}}
\newcommand \ttt {{\rm {t}}}
\newcommand \im {{\rm im}}
\newcommand \Hom {{\rm Hom}}
\newcommand \colim {{{\rm colim}\, }} 
\newcommand \Hocolim {{{\rm Hocolim}\, }} 
\newcommand \hocolim {{{\rm hocolim}\, }} 
\newcommand \coeq {{{\rm coeq}\, }} 
\newcommand \End {{\rm {End}}}
\newcommand \coker {{\rm {coker}}}
\newcommand \id {{\rm {id}}}
\newcommand \Id {{\rm {Id}}}
\newcommand \Ob {{\rm Ob}}
\newcommand \Cyl {{\rm Cyl\; }}
\newcommand \map {{\rm {map}}}
\newcommand \Map {{\rm {Map}}}
\newcommand \op {{^{\rm op}}} 
\newcommand \cor {{\rm {cor}}}
\newcommand \res {{\rm {res}}}
\newcommand \cone {{\rm {cone}}}
\newcommand \psht {{\rm {psht}}}
\newcommand \Spt {{\rm Spt}}
\newcommand \Ev {{\rm Ev}}
\newcommand \Kom {{\rm Kom}}
\newcommand \Tot {{\rm Tot}}
\newcommand \dom {{\rm dom}}
\newcommand \codom {{\rm codom}}
\newcommand \Ord {{\it Ord}}
\newcommand \mg {{\mathfrak m}}
\newcommand \sg {{\Sigma }}
\newcommand \CHM {{\mathscr C\! \mathscr M}}
\newcommand \DM {{\mathscr D\! \mathscr M}}
\newcommand \MM {{\mathscr M\! \mathscr M}}
\newcommand \pt {{*}} 
\newcommand \cf {{\rm cf}} %
\newcommand \SH {{SH}} %
\newcommand \MSH {{\bf SH}} %
\newcommand \MH {{\bf H}} %
\newcommand \de {{\Delta }} 
\newcommand \deop {{\Delta }^{op}} 
\newcommand \SSets {{\deop \mathscr S\! ets}} 
\newcommand \Sets {{\mathscr S\! ets}} 
\newcommand \Sm {{\mathscr S\! m}} 
\newcommand \Sch {{\mathscr S\! ch}} 
\newcommand \Gm {{{\mathbb G}_{\rm m}}}
\newcommand \Top {{\mathscr T\! op}} 
\newcommand \Ke {{\mathscr K}} 
\newcommand \CW {{\mathscr C\! \mathscr W}} 
\newcommand \Shv {{\mathscr S\! hv}} 
\newcommand \cX {{\mathcal X}}
\newcommand \cY {{\mathcal Y}}
\def\r{\color{red}}
\def\b{\color{blue}}
\def\g{\color{green}}
\newtheorem{theorem}{Theorem}
\newtheorem{lemma}[theorem]{Lemma}
\newtheorem{corollary}[theorem]{Corollary}
\newtheorem{proposition}[theorem]{Proposition}
\newtheorem{remark}[theorem]{Remark}
\newtheorem{definition}[theorem]{Definition}
\newtheorem{conjecture}[theorem]{Conjecture}
\newtheorem{example}[theorem]{Example}
\newcommand \lra {\longrightarrow}
\newcommand \hra {\hookrightarrow}
\newenvironment{pf}{\par \noindent{\em Proof}.}{\hfill \framebox(6,6) \par \medskip}
\title[Symmetric powers in abstract homotopy categories]
{\bf Symmetric powers in abstract homotopy categories}
\author{S. Gorchinskiy, V. Guletski\u \i }

\date{24 March 2014}   



\begin{abstract}
\noindent We study symmetric powers in the homotopy categories of abstract closed symmetric monoidal model categories, in both unstable and stable settings. As an outcome, we prove that symmetric powers preserve the Nisnevich and \'etale homotopy type in the unstable and stable motivic homotopy theories of schemes over a base. More precisely, if $f$ is a weak equivalence of motivic spaces, or a weak equivalence between positively cofibrant motivic spectra, with respect to the Nisnevich or \'etale topology, then all symmetric powers $\Sym ^n(f)$ are weak equivalences too. This gives left derived symmetric powers in the corresponding motivic homotopy categories of schemes over a base, which aggregate into a categorical $\lambda $-structures on these categories.
\end{abstract}

\subjclass[2010]{14F42, 18D10, 18G55}




\keywords{model structures, symmetric monoidal model categories, symmetric powers, generating cofibrations, localization of model categories, symmetric spectra, $\AF ^1$-homotopy theory, motivic spaces and motivic symmetric spectra}

\maketitle

\tableofcontents

\section{Introduction}
\label{s-intro}

In topology, symmetric powers preserve homotopy type of $CW$-complexes, which is at the very heart of the fundamental Dold-Thom theorem connecting the homology of a complex to the homotopy groups of its infinite symmetric product. A natural question is to which extent such phenomena could be true in the motivic $\AF ^1$-homotopy theory of schemes over a base? The first steps in this direction were made in the pioneering work \cite{SuslinVoevodsky}. In \cite{VoevMEM} Voevodsky developed a motivic theory of symmetric powers, good enough to construct motivic Eilenberg-MacLane spaces needed for the proof of the Bloch-Kato conjecture. His symmetric powers depend on symmetric powers of schemes presenting motivic spaces. The aim of this paper is to develop a purely homotopical theory of symmetric powers in an abstract symmetric monoidal model category, and to give an affirmative answer to the question when symmetric powers preserve weak equivalences in such a category, working out the unstable and stable settings separately.

More technically, working in a closed symmetric monoidal model category $\bcC $, we address the following two fundamental questions in the paper. Whether left derived symmetric powers exist in the homotopy category $Ho(\bcC )$ and, if they do, whether they aggregate into a (categorical) $\lambda $-structure on the homotopy category of $\bcC $? The latter concept means that, given a morphism in $Ho(\bcC )$, there exists a tower connecting the derived symmetric powers of the domain and codomain, whose cones can be computed by the K\"unneth rule. A categorical $\lambda $-structure is then a system of K\"unneth towers, functorial on morphisms in $Ho(\bcC )$. The decategorification of a categorical $\lambda $-structure via Waldhausen's $K$-theory gives a usual $\lambda $-structure in a commutative ring.

We develop a general machinery to deal with that kind of questions in $\bcC $, and in symmetric spectra over $\bcC $. The methods for the stable and unstable cases are surprisingly different. In the unstable setting, we introduce the notion of symmetrizable cofibrations and study how symmetrizability behaves under cofibrant generation and localization in the sense of \cite{Hirsch}. With this aim, we provide quite a general condition on a left derived functor so that it factors through the localized homotopy category. The main type of localization is the contraction of a diagonalizable interval in $\bcC $. In the stable setting we construct a positive model structure on the category of symmetric spectra, in which weak equivalences are the usual stable weak equivalences and all cofibrations are isomorphisms on level zero. This is a generalization of the positive model structures in topology, constructed by Elmendorf, Kriz, Mandell and May in~\cite{EKMM}. The positive model structure is the main tool in the study of symmetric powers of symmetric spectra over $\bcC $.

Our main destination is, however, the motivic $\AF ^1$-homotopy theory of schemes, and we anticipate numerous applications of our methods and results in arithmetic and geometry through that theory. For example, positive model structures have their account in motivic commutative ring spectra, \cite{CD}, \cite{Hornbostel}, as they do in topology, \cite{Shipley}. For the present, we prove the following two theorems giving positive answers to the above questions in the unstable and stable motivic homotopy theory of schemes over a base:

\medskip

\begin{itemize}

\item[]{}

{\it {\sc Theorem A.} Symmetric powers preserve the Nisnevich and \'etale homotopy type of motivic spaces, left derived symmetric powers exist in the unstable motivic homotopy category of schemes over a base and aggregate into a categorical $\lambda $-structure on it.
}

\end{itemize}

\medskip

\begin{itemize}

\item[]{}

{\it {\sc Theorem B.}
Symmetric powers preserve stable weak equivalences between positively cofibrant motivic symmetric spectra, left derived symmetric powers exist in the motivic stable homotopy category of schemes over a base and aggregate into a categorical $\lambda $-structure on it. The left derived symmetric powers of motivic spectra coincide with the corresponding homotopy symmetric powers.
}

\end{itemize}

\medskip

Now we give a road map of the theorems and propositions appearing in the text. We start the paper by introducing the notion of symmetrizable (trivial) cofibrations in $\bcC $. To study left derived symmetric powers, it would be natural to consider (trivial) cofibrations whose symmetric powers are again (trivial) cofibration. However, we need to introduce a stronger property so that it becomes invariant under compositions and push-outs. Loosely speaking, (trivial) cofibrations are symmetrizable if they are stable under taking colimits of the action of symmetric groups on their push-out products in $\bcC $. If cofibrations are symmetrizable, then one can associate, to a cofibre triangle in $\bcC $, a tower of cofibrations connecting symmetric powers of the vertices of the triangle, and whose cones can be computed by K\"unneth's rule. Such K\"unneth towers can be viewed as a sort of categorification of $\lambda $-structures in commutative rings, and give a powerful tool to work out symmetric powers (Theorem \ref{kuennethkey}). If trivial cofibrations between cofibrant objects are symmetrizable, then symmetric powers preserve weak equivalences between cofibrant objects and so admit their left derived endofunctors on $\bcC $ (Theorem \ref{derived}). When $\bcC $ is cofibrantly generated by the set of generating cofibrations $I$ and the set of trivial generating cofibrations $J$, and if the sets $I$ and $J$ are both symmetrizable, then all cofibrations and trivial cofibrations in $\bcC $ are symmetrizable (Theorem \ref{F-GSW} and Corollary \ref{cofad}). This is useful in applications to concrete cofibrantly generated monoidal model categories, and will be applied to symmetric spectra in Section~\ref{rybakit}. If, in addition, symmetric powers of cofibrant replacements of morphisms in a set of morphisms $S$ are $S$-local equivalences, then trivial cofibrations between cofibrant objects in the left localization $\bcC _S$ are symmetrizable (Theorem \ref{localization}). To show this, we give a condition on a left derived functor (which might not have right adjoint) to factor it through the localized homotopy category (Theorem~\ref{Flocalization}). This result can be applied to a broad range of Bousfield localizations. An important particular case is when $S$-localization is a contraction of a diagonalizable interval into a point (Theorem \ref{locdiagint}).

In topology, i.e. when $\bcC $ is the category of simplicial sets, all cofibrations and trivial cofibrations are symmetrizable (Proposition \ref{symtop1}). If $\bcC $ is the unstable model category of motivic spaces over a base, i.e. the model category for the unstable $\AF ^1$-homotopy category of schemes, cofibrations come up from the simplicial side, so that they are symmetrizable too. The $\AF ^1$-localization is a crux, and Theorem \ref{locdiagint} gives that symmetrizability of trivial cofibrations is stable under $\AF ^1$-localization. In turn, this gives that trivial cofibrations between motivic spaces are symmetrizable, so that the above Theorem \ref{kuennethkey} and Theorem \ref{derived} are applicable in the motivic unstable homotopy theory of schemes over a base. Collecting all these things together we obtain the above Theorem A (Theorem \ref{Teorema A} in the text).

In the stable world, the approach is different. In this paper, a stable homotopy category is the homotopy category of the category $\bcS $ of symmetric spectra over a closed symmetric monoidal model category $\bcC $, stabilizing a smash-with-$T$ functor for a cofibrant object $T$ in $\bcC $, see \cite{Hovey2}. The symmetricity of spectra is very essential due to the fact that the monoidal product in $\bcS $ is coherent with the monoidal product in the corresponding homotopy category $Ho(\bcS )$, \cite{HSS}, \cite{Hovey2}. There are two crucial ingredients in working out symmetric powers of symmetric spectra. The first one is the existence and construction of the positive stable model structure for abstract symmetric spectra (Theorem \ref{positive}). The second ingredient is that $n$-th monoidal powers of positively cofibrant spectra are positively level-wise $\Sigma _n$-equivariantly cofibrant (Proposition \ref{mainresult1}). Using these results we prove (Theorem \ref{mainresult2}) a pretty general version of the theorem due to Elmendorf, Kriz, Mandell and May saying that the $n$-th symmetric power of a positively cofibrant topological spectrum is stably equivalent to the $n$-th homotopy symmetric power of that spectrum, see \cite{EKMM}, Chapter III, Theorem 5.1, and \cite{MMSS}, Lemma 15.5. Our method, however, is different from the one in loc.cit. In constructing positive model structures we systematically use Hirshhorn's localization and in proving Theorem \ref{mainresult2} we use Theorem~\ref{F-GSW} on the stability of symmetrizable (trivial) cofibration under cofibrant generation. Theorem \ref{mainresult2} implies that symmetric powers preserve positive and stable weak equivalences between cofibrant objects in the positive model structure in $\bcS $ (Corollary \ref{hilbre}). In one turn, this gives $\lambda $-structure of left derived symmetric powers in the stable homotopy category $Ho(\bcT )$ (Corollary \ref{main3}). Notice also that the left derived symmetric powers of symmetric spectra are canonically isomorphic to the corresponding homotopy symmetric powers. Now, applying the above general results for symmetric spectra to motivic symmetric spectra of schemes over a base, we obtain Theorem B (Theorem \ref{Teorema B} below).

\medskip

{\it Acknowledgements.} The authors are grateful to Peter May for helpful suggestions and stimulating interest to this work. We also thank Bruno Kahn, Dmitry Kaledin, Roman Mikhailov, Oliver R\"ondigs, Alexander Smirnov, Vladimir Voevodsky and Chuck Weibel for useful discussions along the theme of this paper. The research is done in the framework of the EPSRC grant EP/I034017/1 ``Lambda-structures in stable categories". The first named author was partially supported by the grants RFBR 13-01-12420, NSh-2998.2014.1, AG Laboratory NRU HSE, RF government grant 11.G34.31.0023, and by Dmitry Zimin's Dynasty Foundation. The second named author is grateful to the Institute for Advanced Study in Princeton for the support and hospitality in 2004 - 2006, where and when this work began.

\medskip

\section{Preliminary results}
\label{simplicial}

To get started we recall the notion of a closed symmetric monoidal model category $\bcC $. If $\bcC $ is moreover simplicial then we prove that symmetric powers preserve left homotopies between morphisms, Lemma~\ref{Key Lemma}, motivating all the subsequent considerations in the paper. Being model, $\bcC $ is a category with three distinct pair-wise compatible structures: it is a model category, it is a closed symmetric monoidal category, and it is a simplicial category. Being model, $\bcC $ is equipped with three classes of morphisms, weak equivalences, fibrations and cofibrations, which have the standard lifting properties and meanings, see \cite{Quillen} (or \cite{Hovey1} for modern reference). The monoidality of $\bcC $ means that we have a functor $\wedge :\bcC \times \bcC \to \bcC $ sending any ordered pair of objects $X,Y$ into their monoidal product $X\wedge Y$, and that product is symmetric, i.e. there exists a functorial transposition isomorphism $X\wedge Y\simeq Y\wedge X$. Moreover, the product $\wedge $ is also functorially associative, and there exists a unit object $\uno $, such that $\uno \wedge X\simeq X$ and $X\wedge \uno \simeq X$ for any $X$ in $\bcC $. The monoidal product could be also denoted by $\otimes $ but we prefer to keep to the ``pointed'' notation $\wedge $. Coproducts will be denoted by $\vee $.

A substantial thing here is that monoidality has to be compatible with the model structure. Namely, let $f:X\to Y$ and $f':X'\to Y'$ be two morphisms in $\bcC $ and let
  $$
  (X\wedge Y')\vee _{X\wedge X'}(Y\wedge X')
  $$
be the colimit of the diagram
  $$
  \diagram
  X\wedge X' \ar[rr]^-{f\wedge \id }
  \ar[dd]^-{\id \wedge f'}
  & & Y\wedge X' \\ \\
  X\wedge Y'
  \enddiagram
  $$
A push-out product of $f$ and $f'$ is, by definition, the
unique map
  $$
  f\square f':(X\wedge Y')\vee _{X\wedge X'}(Y\wedge X')\lra
  Y\wedge Y'
  $$
induced by the above colimit. The relation between the model and monoidal structures can be expressed by the following axioms, see Section 4.2 in \cite{Hovey1}:

\medskip

\begin{itemize}

\item[{\rm (A1)}]
If $f$ and $f'$ are cofibrations then $f\square f'$ is also a cofibration. If, in addition, one of the maps $f$ and $f'$ is a weak equivalence, then so is $f\square f'$.

\item[{\rm (A2)}]
If $q:Q\uno \to \uno $ is a cofibrant replacement for the unit object $\uno $, then the maps $q\wedge \id :Q\uno \wedge X\to \uno \wedge X$ and $\id \wedge q:X\wedge Q\uno \to X\wedge \uno $ are weak equivalences for all cofibrant $X$.

\end{itemize}

The functor $X\wedge-$ has right adjoint functor $\iHom (X,-)$. It follows that $X\wedge-$ commutes with colimits.

Now, for any natural number $n$ let $\Sigma _n$ be the symmetric group of permutations of $n$ elements, considered as a category with single object and morphisms being elements of the group. Given an object $X$ in $\bcC $ we have a functor from $\Sigma _n$ to $\bcC $ sending the unique object in $\Sigma_n$ into $X^{\wedge n}$, and permuting factors using the commutativity and associativity constrains in $\bcC $. The $n$-th symmetric power $\Sym ^n(X)$ of $X$ is a colimit of this functor. Clearly, $\Sym ^n$ is an endofunctor on $\bcC $.

\begin{lemma}
\label{Key Lemma}
Suppose that $\bcC$ is, in addition, simplicial and the functor $K\mapsto \uno \wedge K$ from simplicial sets to $\bcC $ is symmetric monoidal. Let $f,g:X\rightrightarrows Y$ be two morphisms in $\bcC $ which are left homotopic, i.e. there exists a morphism $H:X\wedge \Delta [1]\to Y$, such that $H_0=f$ and $H_1=g$, where $\Delta [1]$ is the simplicial interval in $\SSets $. Then, for any natural $n$, the morphism $\Sym ^n(f)$ is left homotopic to the morphism $\Sym ^n(g)$.
\end{lemma}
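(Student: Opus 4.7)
The plan is to build an explicit left homotopy between $\Sym ^n f$ and $\Sym ^n g$ by smashing $H$ with itself, precomposing with a diagonal on the cylinder coordinate, and descending through the colimit defining $\Sym ^n$. First, I form the $n$-fold smash power $H^{\wedge n}\colon (X\wedge I)^{\wedge n}\to Y^{\wedge n}$. The compatibility of the monoidal and simplicial structures recalled in Subsection~\ref{smm} gives a canonical isomorphism
$$
(X\wedge I)^{\wedge n}\cong X^{\wedge n}\wedge I^{\times n},
$$
where $I^{\times n}$ is the $n$-fold cartesian power of $I$ in $\SSets $ (with the obvious reading when $\bcC $ is pointed). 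Smashing the simplicial diagonal $\Delta \colon I\to I^{\times n}$ with $\id _{X^{\wedge n}}$ and composing with $H^{\wedge n}$ produces the candidate morphism
$$
\widetilde H\colon X^{\wedge n}\wedge I\lra Y^{\wedge n}.
$$

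Next, I would verify that $\widetilde H$ is $\sg _n$-equivariant when $X^{\wedge n}$ and $Y^{\wedge n}$ carry the permutation action and the cylinder coordinate $I$ carries the trivial action: the diagonal $\Delta $ is equivariant by construction, $H^{\wedge n}$ is equivariant because $H$ is a single morphism, and the symmetry/associativity isomorphisms are natural. Since $-\wedge I$ is a left adjoint, with right adjoint the path object functor $(-)^I$, it commutes with the colimit defining $\Sym ^n$, so $\widetilde H$ descends to a morphism $H'\colon \Sym ^n X\wedge I\to \Sym ^n Y$. The endpoints check out: writing $i_0\colon X\to X\wedge I$ for the $0$-vertex inclusion, one has $H\circ i_0=f$, and the diagonal sends the $0$-vertex of $I$ to the $n$-tuple of $0$-vertices, so $\widetilde H\circ (\id _{X^{\wedge n}}\wedge i_0)=H^{\wedge n}\circ (i_0)^{\wedge n}=f^{\wedge n}$, which passes to $\Sym ^n f$ under the colimit; the $1$-vertex is symmetric and produces $\Sym ^n g$.

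The step I expect to be most delicate is the $\sg _n$-equivariance of the canonical isomorphism $(X\wedge I)^{\wedge n}\cong X^{\wedge n}\wedge I^{\times n}$, namely the matching between the $\sg _n$-action permuting smash factors on the left and the action permuting $X$-factors and $I$-factors simultaneously on the right. This is a formal coherence statement, a consequence of MacLane-type coherence for symmetric monoidal $\SSets $-categories combined with the axioms recalled in Subsection~\ref{smm}, but it does require some careful bookkeeping of twists. Once it is settled, the remainder of the argument is immediate.
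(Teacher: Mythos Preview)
Your proposal is correct and follows essentially the same approach as the paper's own proof: form $H^{\wedge n}$, reorder factors to $X^{\wedge n}\wedge I^{\wedge n}$, precompose with the simplicial diagonal $\Delta\colon I\to I^{\wedge n}$, and then descend to $\Sym^n$ using that the cylinder functor $-\wedge I$ commutes with colimits (being left adjoint to the path object functor) together with the fact that permuting the $X$-factors is compatible with the diagonal. Your explicit endpoint check and your flagging of the $\sg_n$-equivariance of the coherence isomorphism as the point requiring care are, if anything, slightly more detailed than the paper's treatment, but the argument is the same.
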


\begin{pf}
Let $\delta_n :\Delta [1]\to \Delta [1]^{\wedge n}$ be the diagonal morphism and $\alpha _n$ be the composition of $\id _{X^{\wedge n}}\wedge \delta _n$ with the isomorphism between $X^{\wedge n}\wedge \Delta[1]^{\wedge n}$ and $(X\wedge \Delta[1])^{\wedge n}$. Then $H^{\wedge n}\circ \alpha _n:X^{\wedge n}\wedge \Delta [1]\to Y^{\wedge n}$ is a left homotopy between $f^{\wedge n}$ and $g^{\wedge n}$. The cylinder functor $-\wedge \Delta [1]$ has right adjoint, so commutes with colimits. As permuting factors does not effect the diagonal, the permutation of factors in $(X\wedge \Delta [1])^{\wedge n}$ is coherent with the permutation of factors in $X^{\wedge n}$ in the product $X^{\wedge n}\wedge \Delta [1]$. Taking colimits over $\Sigma _n$ we obtain a left homotopy between $\Sym ^n(f)$ and $\Sym ^n(g)$.
\end{pf}

\begin{example}
\label{examp-compl}
{\rm The existence of a simplicial structure, and its compatibility with the symmetric monoidal structure on $\bcC$ in Lemma \ref{Key Lemma} are essential. Indeed, let $\Kom(\ZZ)$ be the category of unbounded complexes of abelian groups. The category $\Kom(\ZZ)$ inherits the symmetric monoidal structure via total complexes $\Tot (-\otimes -)$ and has a natural structure of a model category whose weak equivalences are quasi-isomorphisms and fibrations are term-wise epimorphisms. The category is closed symmetric monoidal model but not simplicial. Let $X$ be the complex $\ldots \to 0\to \ZZ \stackrel{\id }{\lra }\ZZ \to 0\to \ldots $, where $\ZZ $ is concentrated in degrees $-1$ and $0$ respectively. This complex is homotopically trivial. On the other hand, a calculation shows that $\Sym ^2(X)$ is the complex
  $$
  \ldots \to 0\to \ZZ /2\stackrel{0}{\lra }\ZZ \stackrel{\id }{\lra }\ZZ \to 0\to \ldots \; ,
  $$
where $\ZZ /2$ stands in degree $-2$. Clearly, this $\Sym ^2(X)$ has non-trivial cohomology group in degree $-2$.
}
\end{example}

Let now $\bcC $ be as in Lemma \ref{Key Lemma}, and let $Ho(\bcC )$ be the homotopy category of $\bcC $. A naive way to define symmetric powers in $Ho(\bcC )$ would be through Lemma \ref{Key Lemma} and the standard treatment of homotopy categories as subcategories of fibrant-cofibrant objects factorized by left homotopies on Hom-sets, see \cite[1.2]{Hovey1} or \cite{Quillen}. Indeed, let $\bcC _{\cf }$ be the full subcategory of objects which are fibrant and cofibrant simultaneously. Let, furthermore, $ho(\bcC )$ be the quotient category of $\bcC _{\cf }$ by left homotopic morphisms between fibrant-cofibrant objects in $\bcC $. As symmetric powers respect left homotopies by Lemma \ref{Key Lemma}, we have now a functor $\Sym ^n:ho(\bcC )\to Ho(\bcC )$. The category $\bcC $, being a model category, is endowed with a fibrant replacement functor $R:\bcC \to \bcC _{\rm f}$ and a cofibrant replacement functor $Q:\bcC \to \bcC _{\rm c}$. Combining both we obtain mixed replacement functors $RQ$ and $QR$ from $\bcC $ to the full subcategory $\bcC _{\cf }$ of fibrant-cofibrant objects in $\bcC $, any of which induces a quasi-inverse to the obvious functor from $ho(\bcC )$ to $Ho(\bcC )$. Then one might wish to construct an endofunctor $\Sym ^n$ on $Ho(\bcC )$ as a composition of this quasi-inverse and the above $\Sym ^n$. However, this way is not too much natural as we still have no derived symmetric powers in $Ho(\bcC )$.

\section{Symmetrizable cofibrations}
\label{symcofib}

In this section, we introduce the notion of symmetrizable (trivial) cofibrations. The main result, Theorem~\ref{F-GSW}, asserts that this property is stable under the operations participating in cofibrant generation of model categories. This gives that to check symmetrizability of (trivial) cofibrations it is enough to examine it on generating (trivial) cofibrations, see Corollaries~\ref{cofad},~\ref{symcof} and~\ref{Fsymcof}.

Let $\bcC $ be a closed symmetric monoidal model category with the monoidal product $\wedge :\bcC \times \bcC \to \bcC $. For any two morphisms $f:X\to Y$ and $f':X'\to Y'$ in $\bcC $, let
  $$
  \Box (f,f')=(X\wedge Y')\vee _{X\wedge X'}(Y\wedge X')
  $$
be the coproduct over $X\wedge X'$, and let
  $$
  f\square f':\Box (f,f')\lra Y\wedge Y'
  $$
be a universal morphism from the colimit into $Y\wedge Y'$.
Such defined $\Box $-operations are commutative and associative in the obvious sense. For example, for any three morphisms $f:X\to Y$, $f':X'\to Y'$ and $f'':X''\to Y''$ in $\bcC $ the morphism $(f\Box f')\Box f''$ is the same as the morphism $f\Box (f'\Box f'')$ up to the canonical isomorphism between $\Box (f\Box f',f'')$ and $\Box (f,f'\Box f'')$. Since $\Box $ is an associative operation, for any finite collection $f_i:X_i\to Y_i$, $i=1,\dots ,l$, of morphisms in $\bcC $ we have a well defined morphism
  $$
  f_1\Box \dots \Box f_l:\Box (f_1,\dots ,f_l)\lra Y_1\wedge \dots \wedge Y_l\; .
  $$

For simplicity, let $X'=X$, $Y'=Y$ and $f'=f$. Then we have the $\Box $-squares $\Box ^2_1(f)=\Box (f,f)$ and $f^{\Box 2}=f\Box f$, which can be generalized for higher degrees as follows. Let $\Gamma $ be the category with two objects $0$ and $1$ and one morphism $0\to 1$, and let $\Gamma ^n$ be the $n$-fold Cartesian product of $\Gamma $ with itself. Objects in $\Gamma ^n$ are ordered $n$-tuples of zeros and units. A functor $K:\Gamma \to \bcC $ is just a morphism $f:X\to Y$ in $\bcC $. It is also natural to write $K(f)$ rather than $K$, since $K$ is fully determined by the morphism $f$. For any natural $n$ let $K^n$ be the composition of the $n$-fold Cartesian product $\Gamma ^n\to \bcC ^n$ and the functor $\bcC ^n\stackrel{\wedge }{\to }\bcC $. For any $0\leq i\leq n$ one has a full subcategory $\Gamma ^n_i$ in $\Gamma ^n$ generated by $n$-tuples having not more than $i$ units in them. The restriction of $K^n$ on $\Gamma ^n_i$ will be denoted by $K^n_i(f)$, or simply by $K^n_i$ when $f$ is clear. In other words, $K^n_i$ is a subdiagram in $K^n$ having not more than $i$ factors $Y$ in each vertex. Let then
  $$
  \Box ^n_i(f)=\colim K^n_i(f)
  $$
or simply
  $$
  \Box ^n_i=\colim K^n_i\; .
  $$
Since $K^n_0=X^n$ and $K^n_n=K^n$, respectively, $\Box ^n_0=X^n$ and $\Box^n_n=Y^n$. As $K^n_{i-1}$ is a subdiagram in $K^n_i$ one has a morphism on colimits
  $$
  \Box ^n_{i-1}\lra \Box ^n_i
  $$
for any $1\leq i\leq n$.

Suppose $\bcC$ is cofibrantly generated. Let $G$ be a finite group considered as a one-object category, and let $\bcC ^G$ be the category of functors from $G$ to $\bcC $. We shall be using the standard model structure on $\bcC ^G$ provided by Theorem 11.6.1 in \cite{Hirsch}. In particular, given a morphism $f$ in $\bcC ^G$, it is a weak equivalence (fibration) in $\bcC ^G$ if and only if the same $f$, as a morphism in $\bcC $, is a weak equivalence (fibration) in $\bcC $. For any object $X$ in $\bcC ^G$, let $X/G$ be the colimit of the action of the group $G$ on $X$. \label{See C1} This is a functor from $\bcC ^G$ to $\bcC $ preserving cofibrations.

The group $\sg _n$ acts on $\Gamma ^n$ and so on $K^n$. Each subcategory $\Gamma ^n_i$ is invariant under the action of $\sg _n$. Then $\sg _n$ acts on $K^n_i$ and so on $\Box ^n_i$. Let
  $$
  \tilde \Box ^n_i(f)=\colim _{\sg _n}\Box ^n_i(f)
  $$
for each index $i$. Obviously, $\tilde \Box ^n_0(f)=\Sym ^n(X)$
and $\tilde \Box ^n_n(f)=\Sym ^n(Y)$, and for each index $i$ we have a universal morphism between colimits
  $$
  \tilde \Box ^n_{i-1}(f)\lra \tilde \Box ^n_i(f)\; .
  $$
Sometimes we will drop the morphism $f$ from the notation writing
  $$
  \tilde \Box ^n_i=\colim _{\sg _n}\Box ^n_i\; ,
  $$
  $$
  \tilde \Box ^n_{i-1}\lra \tilde \Box ^n_{i-1}\; ,
  $$
etc.

In new notation, the axiom (A1) of a monoidal model category says, in particular,  that for any cofibration $f:X\to Y$ in $\bcC $ the push-out product
  $$
  f^{\Box 2}:\Box ^2_1(f)\lra Y\wedge Y
  $$
is also a cofibration in $\bcC $. By associativity, it implies that the morphism
  $$
  f^{\Box n}:\Box ^n_{n-1}(f)\lra Y^{\wedge n}
  $$
is a cofibration in $\bcC $ for any natural $n$, not only for $n=2$. It doesn't mean, of course, that the $\Sigma _n$-equivariant morphism $f^{\Box n}$ is a cofibration in $\bcC ^{\Sigma _n}$. A morphism $f:X\to Y$ in $\bcC $ is said to be a {\it symmetrizable} (trivial) cofibration if the corresponding morphism
  $$
  f^{\tilde \Box n}:\tilde \Box ^n_{n-1}(f)\lra \Sym ^n(Y)
  $$
is a (trivial) cofibration for any integer $n\ge 1$. A symmetrizable (trivial) cofibration $f$ is itself a (trivial) cofibration because $\tilde \Box ^1_0(f)\to \Sym ^1(Y)$ is nothing but the original morphism $f$.

Notice that symmetrizability of (trivial) cofibrations is not always the case. Example~\ref{examp-compl} shows that trivial cofibrations are not symmetrizable in the category $\Kom(\ZZ )$. As we will show later, see Remark~\ref{remark-nosymm}, cofibrations are not symmetrizable for spectra over simplicial sets.

\begin{remark}
\label{nostrong}
{\rm If $f:X\to Y$ is a symmetrizable (trivial) cofibration in $\bcC $ it is not necessarily true that the $\Sigma _n$-equivariant morphism $f^{\Box n}$ is a cofibration in $\bcC ^{\Sigma _n}$. Formally, it would also make sense to say that $f$ is a {\it strongly symmetrizable} (trivial) cofibration if $f^{\Box n}$ is a (trivial) cofibration in $\bcC ^{\Sigma _n}$. However, such defined strongly symmetrizable cofibrations are not of much use to us because, as the following example shows, they do not occur even in topology.}
\end{remark}

\begin{example}
\label{nostrongex}
{\rm Let $\bcC $ be the model category of simplicial sets $\SSets $. According to our notation, $\wedge $ in this $\bcC $ stands for the usual Cartesian product of simplicial sets. Let $E\Sigma _n$ be the contractible simplicial set with $(E\Sigma _n)_i=\Sigma _n^{\times i}$, and the diagonal action of $\Sigma _n$. The morphism $f:\emptyset \to X$ is a cofibration for any simplicial set $X$. Then the morphism $f^{\tilde \Box n}$ from $\emptyset $ to $\Sym ^n(X)$ is also a cofibration, for any $n\geq 1$. However, the morphism $f^{\Box n}$ from $\emptyset $ to $X^{\wedge n}$ is not a cofibration in $\bcC ^{\Sigma _n}$. The reason is in fixed points of $X^{\wedge n}$ given by the diagonal map from $X$ to $X^{\wedge n}$. Fixed points have the effect that there are no $\Sigma _n$-morphisms from $X^{\wedge n}$ to $E\Sigma_n\wedge X^{\wedge n}$, as $\Sigma _n$ acts term-wise freely on the simplicial set $E\Sigma _n\wedge X^{\wedge n}$. It follows that the morphism $f^{\Box n}$ does not have a $\Sigma _n$-left lifting property with respect to the trivial fibration $E\Sigma _n\wedge X^{\wedge n}\to X^{\wedge n}$ in $\bcC ^{\Sigma _n}$ and the identity map from $X^{\wedge n}$ to itself. A similar argument applies in many other cases, for example, for pointed simplicial sets and motivic spaces.}
\end{example}

Symmetrizable cofibrations have their own disadvantages. Namely, they do serve unstable homotopy categories, but do not work in stable homotopy theories, since cofibrations between spectra are typically not symmetrizable, see Remark~\ref{remark-nosymm} below. This is why we shall give a better definition, which will serve all the needs relevant to symmetric powers in stable categories.

Let $\bcC $ be a closed symmetric monoidal model category, let $\bcD $ be a cofibrantly generated model category, and let $F:\bcC \to \bcD $ be a functor from $\bcC $ to $\bcD $. Then $F$ induces a functor from $\bcC ^G$ to $\bcD ^G$, which will be denoted by the same symbol $F$. A finite collection $\{ n_1,\dots ,n_l\} $ of non-negative integers will be called a multidegree. A class of morphisms $M$ in $\bcC $ will be called a {\it symmetrizable class} of (trivial) $F$-cofibrations in $\bcC $ if for any finite collection $\{ f_1,\dots ,f_l\} $ of morphisms in the class $M$ and any multidegree $\{ n_1,\dots ,n_l\} $ the morphism
  $$
  F(f_1^{\tilde \Box n_1}\Box \dots \Box f_l^{\tilde \Box n_l})
  $$
is a (trivial) cofibration in the model category $\bcD $. Respectively, the class $M$ will be called a {\it strongly symmetrizable class} of (trivial) $F$-cofibrations in $\bcC $ if for any finite collection $\{ f_1,\dots ,f_l\} $ of morphisms in $M$ and any multidegree $\{ n_1,\dots ,n_l\} $ the morphism
  $$
  F(f_1^{\Box n_1}\Box \dots \Box f_l^{\Box n_l})
  $$
is a (trivial) cofibration in the model category $\bcD ^{\Sigma _{n_1}\times \dots \times \Sigma _{n_l}}$.

Notice that if $\bcD =\bcC $ and $F$ is the identity functor, then $M$ is a (strongly) symmetrizable class of (trivial) $\Id $-cofibrations if and only if $M$ consists of (strongly) symmetrizable (trivial) cofibrations in $\bcC $. The case $l>1$ is essential when $F$ is not monoidal. This will hold in the applications to symmetric spectra in Section \ref{rybakit}.

Let now $\lambda $ be an ordinal and let $X$
be a functor from $\lambda $ to a model category $\bcC $ preserving colimits (although $\lambda $ is not necessarily cocomplete). To shorten notation, for any ordinal $\alpha <\lambda $ let $X_{\alpha }$ be the object $X(\alpha )$, and for any two ordinals $\alpha $ and $\beta $, such that $\alpha \leq \beta <\lambda $, let $f_{\beta ,\alpha }=X(\alpha \leq \beta )$. Let also $X_{\infty }=\colim (X)$ and, for any ordinal $\alpha <\lambda $, let $f_{\infty ,\alpha }:X_{\alpha }\to X_{\infty }$ be the canonical morphism into colimit. Since the set of objects in $\lambda $ has the minimal object $0$, we have the canonical morphism $f_{\infty }:X_0\to X_{\infty }$, which is called a transfinite composition induced by the functor $X$.

If $M$ is a class of morphisms in the category $\bcC $ and $f_{\alpha +1,\alpha }$ is from $M$ for any ordinal $\alpha $, such that $\alpha +1<\lambda $, then we say that $f_{\infty }:X_0\to X_{\infty }$ is a transfinite composition of morphisms from $M$.

\begin{theorem}
\label{F-GSW}
Let $\bcC $ be a cofibrantly generated closed symmetric monoidal model category, $F:\bcC \to \bcD $ be a functor from $\bcC $ to a cofibrantly generated model category $\bcD $ commuting with colimits, and let $M $ be a class of morphisms in $\bcC $. Then the property of $M$ to be a (strongly) symmetrizable class of (trivial) $F$-cofibrations is stable under the addition to $M$ of

\begin{itemize}

\item[{\rm (A)}]
a push-out of a morphism from $M $;

\item[{\rm (B)}]
a retract of a morphism from $M $;

\item[{\rm (C)}]
a composition $g\circ f$, where $f$ and $g$ are two composable morphisms from $M $;

\item[{\rm (D)}]
a transfinite composition $f_{\infty }:X_0\to X_{\infty }$ induced by a functor $X:\lambda \to \bcC $, where $\lambda $ is an ordinal, $X$ commutes with colimits, and for any $\alpha <\lambda $, such that $\alpha +1<\lambda $, the morphism $f_{\alpha +1,\alpha }:X_{\alpha }\to X_{\alpha +1}$ belongs to the class $M $.

\end{itemize}

\end{theorem}

\begin{remark}
{\rm Item (C) can be considered as a particular case of item (D). The category $\bcD$ is required to be cofibrantly generated merely to have a model structure on the category $\bcD^{\Sigma_n}$.}
\end{remark}

The proof of Theorem \ref{F-GSW} occupies the next section of the paper. Now we discuss its consequences. Suppose $\bcC$ is cofibrantly generated by a set of generating cofibrations $I$ and a set of generating trivial cofibrations $J$.

\begin{corollary}
\label{cofad}
If $I$ is a (strongly) symmetrizable set of $F$-cofibrations, then the class of all cofibrations in $\bcC $ is a (strongly) symmetrizable class of $F$-cofibrations. Similarly, if $J$ is a (strongly) symmetrizable set of trivial $F$-cofibrations, then the class of all trivial cofibrations in $\bcC $ is a (strongly) symmetrizable class of trivial $F$-cofibrations.
\end{corollary}

\begin{proof}
By Theorem~\ref{F-GSW}, the class of retracts of relative $I$-cell complexes is a (strongly) symmetrizable class of $F$-cofibrations. On the other hand, this class coincides with all cofibrations in $\bcC$. Similar argument applies to trivial cofibrations.
\end{proof}

Applying Corollary \ref{cofad} to a cofibration $\emptyset \to X$ we obtain two more corollaries.

\begin{corollary}
\label{symcof}
Suppose all morphisms in $I$ are symmetrizable. Then any symmetric power $\Sym ^n(X)$ of a cofibrant object $X$ in $\bcC $ is cofibrant.
\end{corollary}

\begin{corollary}
\label{Fsymcof}
If $I$ is a strongly symmetrizable set of $F$-cofibrations, then for any cofibrant object $X$ in $\bcC $ we have that $F(X^{\wedge n})$ is a cofibrant object in $\bcD^{\Sigma_n}$.
\end{corollary}

For short, by abuse of notation, throughout the text we will say that $I$ is symmetrizable if it consists of symmetrizable cofibrations, and that $J$ is symmetrizable if it consists of symmetrizable trivial cofibrations.

Finally, we compare the pointed v.s. unpointed cases of our setup. Assuming the terminal object $*$ is the monoidal unit and cofibrant in $\bcC $, the pointed category $\bcC _*= * \downarrow \bcC $ inherits the monoidal model structure by Proposition 4.2.9 in \cite{Hovey1}.

\begin{lemma}
\label{pointed}
Let $f$ be a morphism in $\bcC _*$, which is a symmetrizable (trivial)  cofibration as a morphism in $\bcC $. Then $f$ is a symmetrizable (trivial) cofibration as a morphism in $\bcC _*$.
\end{lemma}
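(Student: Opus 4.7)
The plan is to prove the symmetrizability of $f$ in $\bcC_*$ by realizing the pointed-version morphism $\tilde\Box^n_{n-1,\wedge}(f)\to\Sym^n_\wedge Y$ as a cobase change in $\bcC$ of the unpointed-version morphism $\tilde\Box^n_{n-1,\otimes}(f)\to\Sym^n_\otimes Y$; the latter is a (trivial) cofibration by the assumed symmetrizability of $f$ in $\bcC$. The first step is a reduction: since $\bcC_*=*\downarrow\bcC$ carries the standard undercategory model structure, a morphism of pointed objects is a (trivial) cofibration in $\bcC_*$ if and only if its underlying morphism in $\bcC$ is, so it suffices to establish the (trivial) cofibration property in $\bcC$.

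The heart of the argument is the pushout presentation of $\wedge$ in terms of $\otimes$ recalled at the beginning of the present subsection. For pointed $Z_1,\ldots,Z_n$ the smash $Z_1\wedge\cdots\wedge Z_n$ is the pushout in $\bcC$ of $F(Z_1,\ldots,Z_n)\to Z_1\otimes\cdots\otimes Z_n$ along $F(Z_1,\ldots,Z_n)\to *$, where $F$ denotes the fat wedge; these pushouts are natural in each factor and $\Sigma_n$-equivariant. Applied vertex-by-vertex to $K^n_{n-1}(f)$, they present the pointed cube as a pushout, in the functor category $\bcC^{I^n_{n-1}}$, of the unpointed cube $K^n_{n-1,\otimes}(f)$ and the constant-$*$ diagram along a fat-wedge subdiagram. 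Since both $\colim$ and $\Sigma_n$-coinvariants commute with pushouts, I obtain presentations
$$
\tilde\Box^n_{n-1,\wedge}(f)\cong\tilde\Box^n_{n-1,\otimes}(f)\cup_{G_1}*\;,\qquad\Sym^n_\wedge Y\cong\Sym^n_\otimes Y\cup_{G_2}*\;,
$$
together with a natural comparison map $G_1\to G_2$ in $\bcC$. I then need to verify that the commutative square
$$
\diagram
\tilde\Box^n_{n-1,\otimes}(f)\ar[dd]\ar[rr] & & \Sym^n_\otimes Y\ar[dd] \\ \\
\tilde\Box^n_{n-1,\wedge}(f)\ar[rr] & & \Sym^n_\wedge Y
\enddiagram
$$
is itself a pushout in $\bcC$; granting this, the lower arrow is a cobase change of the upper one, hence a (trivial) cofibration, and the proof is finished.

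The main obstacle is precisely this pushout verification. By pushout pasting it reduces to showing that $\Sym^n_\otimes Y\cup_{G_1}*$, taken along the composite $G_1\to\tilde\Box^n_{n-1,\otimes}(f)\to\Sym^n_\otimes Y$, coincides with $\Sym^n_\otimes Y\cup_{G_2}*$, for which it suffices that the comparison $G_1\to G_2$ be an epimorphism: then the two pushouts collapse the same subobject of $\Sym^n_\otimes Y$. To establish this, I use that $G_2$ is the symmetrization of the full fat wedge $F(Y^n)\subset Y^{\otimes n}$, whose $i$-th ``axis'' $Y^{\otimes(i-1)}\otimes *\otimes Y^{\otimes(n-i)}$ is hit by the fat-wedge contribution of the vertex $Y^{\otimes(i-1)}\otimes X\otimes Y^{\otimes(n-i)}$ of $K^n_{n-1}(f)$ via the composite $*\to X\to Y$ (using that $f$ is a pointed morphism); conversely the entire fat-wedge subdiagram clearly maps into $F(Y^n)$, so after symmetrization $G_1$ surjects onto $G_2$. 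I expect this combinatorial unwinding to be the only delicate step; everything else is routine naturality together with the standard stability of (trivial) cofibrations under cobase change.
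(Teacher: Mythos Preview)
Your approach is correct and follows the same strategy as the paper: present the $\wedge$-objects as pushouts of the $\otimes$-objects along a ``fat-wedge'' type object mapping to $*$, then use pushout pasting to conclude that $\tilde\Box^n_{n-1,\wedge}(f)\to\Sym^n_\wedge Y$ is a cobase change of $\tilde\Box^n_{n-1,\otimes}(f)\to\Sym^n_\otimes Y$.

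The paper's execution is a bit cleaner and avoids your epimorphism detour. Rather than working with two distinct objects $G_1,G_2$ and arguing $G_1\to G_2$ is epi, the paper takes for $G$ the \emph{coproduct} of the $n$ ``axis'' cubes $G(i)=K^{i-1}(f)_\otimes\otimes *\otimes K^{n-i}(f)_\otimes$ (with $*$ the constant functor in the $i$-th slot). The point is that each $G(i)$ is constant in its $i$-th coordinate, so the colimit of $G$ over $I^n_{n-1}$ already equals the value of $G$ at the terminal vertex $1_n$. Hence the \emph{same} object presents both $\tilde\Box^n_{n-1,\wedge}(f)$ and $\Sym^n_\wedge Y$ as pushouts of their $\otimes$-counterparts, and the right-hand square is a pushout immediately by the pasting law---no separate epimorphism check needed. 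Your argument recovers this implicitly (the epimorphism $G_1\to G_2$ is in fact split by the ``constant-coordinate'' observation), but the paper's formulation makes the combinatorics transparent. One small wording issue: your phrase ``via the composite $*\to X\to Y$'' is not quite what is happening; the $i$-th axis $Y^{\otimes(i-1)}\otimes *\otimes Y^{\otimes(n-i)}$ already appears verbatim as the $i$-th axis at the vertex $(1,\ldots,1,0,1,\ldots,1)$, and maps over by the identity.
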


\begin{pf}
This follows from the fact that $f^{\Box n}$ in $\bcC _*$ is a push-out of $f^{\Box n}$ in $\bcC $.
\end{pf}

\section{The proof of Theorem \ref{F-GSW}}
\label{boksikcalculus}

First we collect some technical lemmas needed in proving the theorem. If $f:X\to Y$ is a morphism in $\bcC $ and there exists a push-out square
   $$
  \diagram
  X' \ar[dd]_-{f'} \ar[rr]^-{} & & X \ar[dd]^-{f} \\ \\
  Y' \ar[rr]^-{} & & Y
  \enddiagram
  $$
then sometimes we will write
  $$
  f=\psht (f')\; ,
  $$
not specifying the horizontal morphisms of the square.

\begin{lemma}
\label{ulitka}
Let $f=\psht (f')$, $e:A\to B$ a morphism in $\bcC $ and let
  $$
  d:\Box (f',e)\lra \Box (f,e)
  $$
be the universal morphism between two colimits induced by the push-out square above. Then the commutative square
  $$
  \diagram
  \Box (f',e) \ar[dd]_-{f'\Box e} \ar[rr]^-{d} & & \Box (f,e) \ar[dd]^-{f\Box e} \\ \\
  Y'\wedge B \ar[rr]^-{} & & Y\wedge B
  \enddiagram
  $$
is push-out, i.e. $\psht(f')\Box e=\psht(f'\Box e)$.
\end{lemma}

\begin{pf}
As $\wedge $-multiplication is adjoint from the left and so it commutes with colimits, the commutative squares
  $$
  \diagram
  X'\wedge A \ar[dd]_-{f'\wedge \id } \ar[rr]^-{} & & X\wedge A \ar[dd]^-{f\wedge \id } \\ \\
  Y'\wedge A \ar[rr]^-{} & & Y\wedge A
  \enddiagram\quad\quad
  \diagram
  X'\wedge B \ar[dd]_-{f'\wedge \id } \ar[rr]^-{} & & X\wedge B \ar[dd]^-{f\wedge \id } \\ \\
  Y'\wedge B \ar[rr]^-{} & & Y\wedge B
  \enddiagram
  $$
are push-out. The morphism $e$ induces a morphism from the left push-out square to the right one. This and the universal property of the colimits $\Box(f',e)$ and $\Box(f,e)$ allow to show that the commutative square in question is push-out.
\end{pf}

\begin{lemma}
\label{vasnecov}
Let $f_1,\ldots,f_n$ be a collection of morphisms in $\bcC$. Then we have
$$
\psht(f_1)\Box\ldots\Box\psht(f_n)=\psht(f_1\Box\ldots \Box f_n)\;.
$$
\end{lemma}
\begin{proof}
Use Lemma~\ref{ulitka} and associativity of the $\Box$-product.
\end{proof}

Let $G$ be a finite group and let $H$ be a subgroup in it. The natural restriction $\res ^G_H:\bcC ^G\to \bcC ^H$ has left adjoint functor $\cor ^G_H:\bcC ^H\to \bcC ^G$, such that $(\cor ^G_H,\res ^G_H)$ is a Quillen adjunction, see Theorem 11.9.4 in \cite{Hirsch}. Recall that $\cor ^G_H(X)\simeq (G\times X)/H$ and $\cor^G_H(X)/G\simeq X/H$.

\begin{lemma}
\label{at the core}
Let $X\stackrel{f}{\to }Y\stackrel{g}{\to }Z$ be two composable morphisms in $\bcC $, and let $n$ be a positive integer. Then  the morphism $(gf)^{\Box n}:\Box ^n_{n-1}(gf)\to Z^{\wedge n}$ is a composition
  $$
  g^{\Box n}\circ \psht (\cor ^{\Sigma _n}_{\Sigma _{n-1}\times \Sigma _{1}}(g^{\Box (n-1)}\Box f))\circ
  \dots \circ \psht (\cor ^{\Sigma _n}_{\Sigma _{1}\times \Sigma _{n-1}}(g\Box f^{\Box (n-1)}))\circ \psht (f^{\Box n})\; ,
  $$

\noindent where $\Sigma _{i}\times \Sigma _{n-i}$ is canonically embedded into $\Sigma _n$ for each $i$, and $\psht (f^{\Box n})$ is a push-out of $f^{\Box n}$ with respect to the universal morphism $\Box ^n_{n-1}(f)\to \Box ^n_{n-1}(gf)$. \label{See D2 & D3}
\end{lemma}

\begin{pf}
Let $J$ be the category $0\to 1\to 2$. A pair of subsequent morphisms $f:X\to Y$ and $g:Y\to Z$ in $\bcC $ can be considered as a functor $K(f,g):J\to \bcC $ from $J$ to $\bcC $. Let $J^n$ and $\bcC ^n$ be the Cartesian $n$-th powers of the categories $J$ and $\bcC $ respectively, and let $K^n(f,g):J^n\to \bcC$ be the composition of the $n$-th Cartesian power of the functor $K(f,g)$ and the $n$-th monoidal product $\wedge :\bcC ^n\to \bcC $. In particular, $K^n(f,g)$ is a commutative diagram in $\bcC $, whose vertices are monoidal products of the three objects $X$, $Y$ and $Z$. Notice that the order of the factors is important here.

For short, let $K^n=K^n(f,g)$, and consider a subdiagram $L$ in $K^n$ generated by the vertices containing at least one factor $X$, and for any index $i\in \{ 0,1,\dots ,n\} $ let $K^n_i$ be a subdiagram in $K^n$ generated by vertices containing $\leq i$ factors $Z$. Let also $L_i=L\cup K^n_i$ and put $L_{-1}=L$. Then we have a filtration
  $$
  L_{-1}\subset L_0\subset L_1\subset L_2\subset \dots \subset L_n=K^n
  $$
and, respectively, a chain of morphisms between colimits
  $$
  \colim (L_{-1})\to \colim (L_0)\to \colim (L_1)\to \dots \to
  \colim (K^n)\; ,
  $$
whose composition is nothing but $(gf)^{\Box n}$.

For any $0\leq i\leq n$ the object $\Box (g^{\Box i},f^{\Box (n-i)})$ is a colimit of a subdiagram in $L_{i-1}$, so that one has a universal morphism from $\Box (g^{\Box i},f^{\Box (n-i)})$ to $\colim (L_{i-1})$. Since $Z^{\wedge i}\wedge Y^{\wedge (n-i)}$ is a vertex in the diagram $L_i$, we have a morphism from $Z^{\wedge i}\wedge Y^{\wedge (n-i)}$ to $\colim (L_i)$. Finally, we have a standard morphism $g^{\Box i}\Box f^{\Box (n-i)}:\Box (g^{\Box i},f^{\Box (n-i)})\to Z^{\wedge i}\wedge Y^{\wedge (n-i)}$. Collecting these morphisms together we get a commutative diagram $$
  \diagram
  \Box (g^{\Box i},f^{\Box (n-i)}) \ar[dd]_-{}\ar[rr]^-{}& & Z^{\wedge i}\wedge Y^{\wedge (n-i)} \ar[dd]_-{}\\ \\
  \colim (L_{i-1})   \ar[rr]^-{} & & \colim (L_i)
  \enddiagram
  $$
This is a $\Sigma_{i}\times \Sigma_{n-i}$-equivariant commutative diagram, which yields a $\Sigma_n$-equivariant commutative diagram
  $$
  \diagram
  \cor ^{\Sigma _n}_{\Sigma _{i}\times \Sigma _{n-i}}(\Box (g^{\Box i},f^{\Box (n-i)})) \ar[dd]_-{} \ar[rr]^-{} & & \cor ^{\Sigma _n}_{\Sigma _{i}\times \Sigma _{n-i}}(Z^{\wedge i}\wedge Y^{\wedge (n-i)}) \ar[dd]_-{}\\ \\
  \colim (L_{i-1}) \ar[rr]^-{} & & \colim (L_i)
  \enddiagram
  $$
Straightforward verification shows that this is a push-out square.
\end{pf}

\begin{lemma}
\label{potap}
For any three morphisms $X\stackrel{f}{\to }Y\stackrel{g}{\to }Z$ and $A\stackrel{e}{\to }B$ in $\bcC$ we have that
  $$
  (gf)\Box e=(g\Box e)\circ \varkappa \; ,
  $$
where $\varkappa :\Box (gf,e)\to \Box (g,e)$ is a universal morphism between colimits and the square
  $$
  \diagram
  \Box (f,e) \ar[dd]_-{f\Box e} \ar[rr]^-{} & & \Box (gf,e) \ar[dd]^-{\varkappa } \\ \\
  Y\wedge V \ar[rr]^-{} & & \Box (g,e)
  \enddiagram
  $$
is push-out, i.e. we have $(gf)\Box e=(g\Box e)\circ \psht(f\Box e)$.
\end{lemma}

\begin{pf}
The top horizontal morphism $\Box (f,e)\to \Box (gf,e)$ in the above diagram is also a universal morphism between colimits. The proof of the lemma then follows from the appropriate commutative diagrams for the products $f\Box e$, $gf\Box e$ and $g\Box e$ involved into the lemma. \label{See D1}
\end{pf}

\begin{lemma}
\label{kotofei}
Let $X_1\stackrel{f_1}{\to }X_2\stackrel{f_2}{\to }\dots \stackrel{f_n}{\to }X_{n+1}$ and $e:A\to B$ be morphisms in $\bcC $. Then one has
$$
(f_n\circ \dots \circ f_1)\Box e=(f_n\Box e)\circ\psht(f_{n-1}\Box e)\circ\dots\circ\psht(f_1\Box e)\;.
$$
\end{lemma}

\begin{pf}
Use induction by $n$. If $n=2$ then the lemma is just Lemma \ref{potap}. For the inductive step,
  $$
  (f_n\circ \dots \circ f_1)\Box h=
  (f_n\circ (f_{n-1}\circ \dots \circ f_1))\Box h=
  $$
  $$
  (f_n\Box h)\circ
  \psht ((f_{n-1}\circ \dots \circ f_1)\Box h)\; ,
  $$
where the last equality is provided by Lemma \ref{potap} too.
\end{pf}

\begin{lemma}
\label{doublecor}
Let $G$ and $G'$ be two finite groups, let $H$ be a subgroup in $G$ and $H'$ be a subgroup in $G'$. Let also $f:X\to Y$ and $f':X'\to Y'$ be two morphisms in $\bcC $. Then
  $$
  \cor ^G_H(f)\Box \cor ^{G'}_{H'}(f')=
  \cor ^{G\times G'}_{H\times H'}(f\Box f')\; .
  $$
\end{lemma}

\begin{pf}
The lemma holds true because $\wedge $-multiplication commutes with colimits and the order of counting colimits is not important.
\end{pf}

\begin{lemma}
\label{aux}
Let $\lambda $ be an ordinal. For any two functors $X$ and $Y$ from $\lambda $ to $\bcC $, not necessarily preserving colimits, one has
  $$
  \colim _{\alpha <\lambda }(X_{\alpha }\wedge Y_{\alpha })=
  (\colim _{\alpha <\lambda }X_{\alpha })\wedge (\colim _{\beta <\lambda }Y_{\beta })
  $$
\end{lemma}

\begin{pf}
Indeed, as the monoidal product $\wedge $ in $\bcC $ is closed, smashing with an object commutes with colimits. This is why
  $$
  (\colim _{\alpha <\lambda }X_{\alpha })\wedge (\colim _{\beta <\lambda }Y_{\beta })=\colim _{\alpha <\lambda }(X_{\alpha }\wedge \colim _{\beta <\lambda }Y_{\beta })=
  $$
  $$
  =\colim _{\alpha <\lambda }\colim _{\beta <\lambda }(X_{\alpha }\wedge Y_{\beta })\; .
  $$
Since all arrows in the diagram $X\wedge Y$ are targeted towards the diagonal objects $X_{\alpha }\wedge Y_{\alpha }$, the last colimit is the colimit of these diagonal objects $X_{\alpha }\wedge Y_{\alpha }$.
\end{pf}

Let now $\lambda $ be an ordinal and let $X$ be a colimit-preserving functor from the ordinal $\lambda $ to the category $\bcC $. For any two ordinals $\alpha $ and $\beta $, such that $\alpha \leq \beta <\lambda $, let $\Box ^n_{n-1}(f_{\alpha ,0})\to \Box ^n_{n-1}(f_{\beta ,0})$ be the universal morphism from Lemma \ref{at the core} being applied to the composition $f_{\beta ,0}=f_{\beta ,\alpha }\circ f_{\alpha ,0}$. Similarly, let $\Box ^n_{n-1}(f_{\alpha ,0})\to \Box ^n_{n-1}(f_{\infty })$ be the universal morphism from Lemma \ref{at the core} applied to the composition $f_{\infty }=f_{\infty ,\alpha }\circ f_{\alpha ,0}$. It is not hard to verify that the collection of objects $\Box ^n_{n-1}(f_{\alpha ,0})$ and morphisms $\Box ^n_{n-1}(f_{\alpha ,0})\to \Box ^n_{n-1}(f_{\beta ,0})$ gives a functor from $\lambda $ to $\bcC $.

\begin{lemma}
\label{3na}
In the above terms,
  $$
  \Box ^n_{n-1}(f_{\infty })=\colim _{\alpha <\lambda }\Box ^n_{n-1}(f_{\alpha ,0})\; ,
  $$
  $$
  X_{\infty }^{\wedge n}=\colim _{\alpha <\lambda }X^{\wedge n}_{\alpha }\; ,
  $$
and the square
  $$
  \diagram
  \Box ^n_{n-1}(f_{\alpha ,0})\ar[dd]_-{f_{\alpha ,0}^{\Box n}} \ar[rr]^-{} & & \Box ^n_{n-1}(f_{\infty }) \ar[dd]^-{f_{\infty }^{\Box n}} \\ \\
  X^{\wedge n}_{\alpha } \ar[rr]^-{} & & X_{\infty }^{\wedge n}
  \enddiagram
  $$
is commutative for any $\alpha $, i.e.
  $$
  f_{\infty }^{\Box n}=\colim _{\alpha <\lambda }(f_{\alpha ,0}^{\Box n})\; .
  $$
\end{lemma}

\begin{pf}
By Lemma \ref{aux},
  $$
  K^n_i(f_{\infty})=\colim _{\alpha <\lambda }K^n_i(f_{\alpha,0})
  $$
for any index $i$, where the colimit is taken in the category of functors from subcategories in $I^n$ to $\bcC $. It implies the following computation:
  $$
  \Box ^n_i(f_{\infty})=\colim K^n_i(f_{\infty})=\colim (\colim _{\alpha <\lambda }K^n_i(f_{\alpha,0}))=
  $$
  $$
  =\colim _{\alpha <\lambda }(\colim K^n_i(f_{\alpha,0}))=
  \colim _{\alpha <\lambda }\Box ^n_i(f_{\alpha,0})\; .
  $$
In particular,
  $$
  \Box ^n_{n-1}(f_{\infty})=\colim _{\alpha <\lambda }\Box ^n_{n-1}(f_{\alpha,0})\; ,
  $$
  $$
  X_{\infty }^n=\Box ^n_n(f_{\infty})=\colim _{\alpha <\lambda }\Box ^n_n(f_{\alpha,0})=
  \colim _{\alpha <\lambda }X^n_{\alpha }\; ,
  $$
and both isomorphisms are connected by the corresponding commutative square.
\end{pf}

\begin{lemma}
\label{ladder}
Let $\bcE $ be a model category and let $\lambda $ be an ordinal. Let
  $$
  \xymatrix{
  U,V:\lambda \ar@<+0.5ex>[r]^-{} \ar@<-0.5ex>[r]^-{} & \bcE
  }
  $$
be two functors from $\lambda $ to $\bcE $, both commuting with colimits, and let
  $$
  \psi :U\to V
  $$
be a natural transformation. For any ordinal $\alpha <\lambda $, such that $\alpha +1<\lambda $, let
  $$
  \diagram
  U_{\alpha } \ar[dd]_-{\psi _{\alpha }} \ar[rr]^-{} & & U_{\alpha +1} \ar[dd]^-{} \\ \\
  V_{\alpha } \ar[rr]^-{} & & W_{\alpha }
  \enddiagram
  $$
be a push-out square, and let $h_{\alpha }$ be a universal morphism from the colimit $W_{\alpha }$ to $V_{\alpha +1}$. Assume that for any $\alpha <\lambda $, such that $\alpha +1<\lambda $, the morphism $h_{\alpha }$ and the morphism $\psi _0$ are cofibrations in $\bcE $. Then the universal morphism
  $$
  \colim (\psi ):\colim (U)\to \colim (V)
  $$
is also a cofibration in $\bcE $.
\end{lemma}

\begin{pf}
For any ordinal $\alpha <\lambda $, such that $\alpha +1<\lambda $, let $D_{\alpha }$ be the diagram
  $$
  \xymatrix{
  U_0 \ar[r]^-{} \ar[dd]_-{} & U_1 \ar[r]^-{} \ar[dd]_-{}
  & \; ... \ar[r]^-{} & U_{\alpha } \ar[r]^-{} \ar[dd]_-{}
  & U_{\alpha +1} \ar[r]^-{} & \; ... \ar[r]^-{} & \colim (U)
  \\ \\
  V_0 \ar[r]^-{} & V_1 \ar[r]^-{} & \; ... \ar[r]^-{} & V_{\alpha } & & &
  }
  $$
Let also $D_{-1}$ be the diagram
  $$
  U_0\to U_1\to \dots \to U_{\alpha }\to \dots
  \to \colim (U)\; ,
  $$
and let $D_{\lambda }$ be the diagram
  $$
  \xymatrix{
  U_0 \ar[r]^-{} \ar[dd]_-{} & U_1 \ar[r]^-{} \ar[dd]_-{}
  & \; ... \ar[r]^-{} & U_{\alpha } \ar[r]^-{} \ar[dd]_-{}
  & U_{\alpha +1} \ar[r]^-{} \ar[dd]_-{} & \; ... \ar[r]^-{} & \colim (U) \ar[dd]_-{}
  \\ \\
  V_0 \ar[r]^-{} & V_1 \ar[r]^-{}
  & \; ... \ar[r]^-{} & V_{\alpha } \ar[r]^-{}
  & V_{\alpha +1} \ar[r]^-{} & \; ... \ar[r]^-{} & \colim (V)
  }
  $$
Let now $S_{\alpha }=\colim (D_{\alpha })$, $S_{-1}=\colim (D_{-1})=\colim (U)$ and $S_{\lambda }=\colim (D_{\lambda })=\colim (V)$. One has a transfinite filtration of diagrams
  $$
  D_{-1}\subset D_0\subset D_1\subset \dots \subset D_{\alpha }\subset D_{\alpha +1}\subset \dots \subset D_{\lambda }\; .
  $$
Respectively, we have a decomposition of the morphism $\colim (\psi )$ into a transfinite composition
  $$
  \colim (U)=S_{-1}\to S_0\to S_1\to \dots \to S_{\alpha }\to S_{\alpha +1}\to \dots \to S_{\lambda }=\colim (V)\; .
  $$
For any $\alpha <\lambda $, such that $\alpha +1<\lambda $, the square
  $$
  \diagram
  W_{\alpha } \ar[dd]_-{} \ar[rr]^-{h_{\alpha }} & &
  V_{\alpha +1} \ar[dd]^-{} \\ \\
  S_{\alpha } \ar[rr]^-{} & & S_{\alpha +1}
  \enddiagram
  $$
is push-out. Since our input is that all $h_{\alpha }$ and $\psi _0$ are cofibrations, we get that the morphism $\colim (\psi )$ is a transfinite composition of cofibrations in $\bcE $. Since a transfinite composition of cofibrations is a cofibration, the lemma is proved.
\end{pf}

Now we are ready to give the proof of Theorem \ref{F-GSW}. We will only consider the strong symmetrizability case. The symmetrizability assertion then follows by applying in addition the colimit under the action of the symmetric group.

Let $f',f_2,\dots ,f_l$ be $l$ morphisms in $M $ and let $f$ be a push-out of $f'$. To prove (A) we need to show that the morphism $F(f^{\Box n}\Box f_2^{\Box n_2}\Box \dots \Box f_l^{\Box n_l})$ is a cofibration in the category $\bcD ^{\Sigma _n\times \Sigma _{n_2}\times \dots \times \Sigma _{n_l}}$ for any multidegree $\{n,n_2,\dots ,n_l\}$.

By Lemma \ref{vasnecov}, $f^{\Box n}\Box f_2^{\Box n_2}\Box \dots \Box f_l^{\Box n_l}$ is a push-out of $f'^{\Box n}\Box f_2^{\Box n_1}\Box \dots \Box f_l^{\Box n_l}$. Since $F$ commutes with colimits, the morphism $F(f^{\Box n}\Box f_2^{\Box n_2}\Box \dots \Box f_l^{\Box n_l})$ is a push-out of $F(f'^{\Box n}\Box f_2^{\Box n_1}\Box \dots \Box f_l^{\Box n_l})$. Since the latest morphism is a cofibration in $\bcD ^{\Sigma _n\times \Sigma _{n_2}\times \dots \times \Sigma _{n_l}}$, the morphism $F(f^{\Box n}\Box f_2^{\Box n_2}\Box \dots \Box f_l^{\Box n_l})$ is a cofibration too. So, (A) is done.

To prove (B) we just notice that a retract of a cofibration is a cofibration, and retraction is a categoric property coomuting with colimits. This gives (B).

Let $f,g,f_2,\dots ,f_l$ be $l+1$ morphisms in $M $, where $f$ and $g$ are composable. To prove (C) we need to show that for any multidegree $\{n,n_2,\dots,n_l\}$ the morphism $F((gf)^{\Box n}\Box f_2^{\Box n_2}\Box \dots \Box f_l^{\Box n_l})$ is a cofibration in $\bcD ^{\Sigma _n\times \Sigma _{n_2}\times \dots \times \Sigma _{n_l}}$.

By Lemma \ref{at the core} we have that $(gf)^{\Box n}$ is a composition of push-outs of the morphisms $\cor _{\Sigma _{i}\times \Sigma _{n-i}}^{\Sigma _n}(g^{\Box i}\Box f^{\Box (n-i)})$ for $i=0,1,\dots ,n$. By Lemma \ref{kotofei} and Lemma~\ref{vasnecov}, the morphism $(gf)^{\Box n}\Box f_2^{\Box n_2}\Box \dots \Box f_l^{\Box n_l}$ is a composition of push-outs of the morphisms
  $$
  \cor _{\Sigma _{i}\times \Sigma _{n-i}}^{\Sigma _n}(g^{\Box i}\Box f^{\Box (n-i)})
  \Box f_2^{\Box n_2}\Box \dots \Box f_l^{\Box n_l}\; .
  $$
By Lemma~\ref{doublecor}, the latest morphism can be also viewed as the morphism
  $$
  \cor _{\Sigma _{i}\times \Sigma _{n-i}\times \Sigma _{n_2}\times \dots \times \Sigma _{n_l}}^{\Sigma _n\times \Sigma _{n_2}\times \dots \times \Sigma _{n_l}}
  (g^{\Box i}\Box f^{\Box (n-i)}\Box f_2^{\Box n_2}\Box \dots \Box f_l^{\Box n_l})\; .
  $$
Since any $\cor $ is a colimit and the functor $F$ commutes with colimits, the morphism $F((gf)^{\Box n}\Box f_2^{\Box n_2}\Box \dots \Box f_l^{\Box n_l})$ is a composition of push-outs of morphisms of type
  $$
  \cor _{\Sigma _{i}\times \Sigma _{n-i}\times \Sigma _{n_2}\times \dots \times \Sigma _{n_l}}^{\Sigma _n\times \Sigma _{n_2}\times \dots \times \Sigma _{n_l}}
  (F(g^{\Box i}\Box f^{\Box (n-i)}\Box f_2^{\Box n_2}\Box \dots \Box f_l^{\Box n_l}))\; .
  $$
Since the morphisms $f,g,f_2,\dots ,f_l$ are taken from the class $M $, every morphism $F(g^{\Box i}\Box f^{\Box (n-i)}\Box f_2^{\Box n_2}\Box \dots \Box f_l^{\Box n_l})$ is a cofibration in the category $\bcD ^{\Sigma _{n-i}\times \Sigma _i\times \Sigma _{n_2}\times \dots \times \Sigma _{n_l}}$. As $\cor ^G_H$ is a left Quillen functor for any group $G$ and a subgroup $H$ in it, we obtain that $F((gf)^{\Box n}\Box f_2^{\Box n_2}\Box \dots \Box f_l^{\Box n_l})$ is a cofibration in the category $\bcD ^{\Sigma _n\times \Sigma _{n_2}\times \dots \times \Sigma _{n_l}}$.

Now we prove (D). For any ordinal $\lambda $ let ${\rm D}(\lambda )$ be the property (D) in the statement of the theorem being considered for this ordinal $\lambda $. We need to show ${\rm D}(\lambda )$ for any ordinal $\lambda $. To do that we are going to apply the method of transfinite induction. Namely, suppose that for any ordinal $\alpha <\lambda $ the property ${\rm D}(\alpha )$ is satisfied. We will show that this assumption implies that ${\rm D}(\lambda )$ holds true.

Consider a finite collection $f_2,\dots ,f_l$ of morphisms in $M $. We need to show that for any positive integers $n,n_2,\dots ,n_l$ the morphism $F(f_{\infty }^{\Box n}\Box f_2^{\Box n_2}\Box \dots \Box f_l^{\Box n_l})$
is a cofibration in the category $\bcD ^{\Sigma _n\times \Sigma _{n_2}\times \dots \times \Sigma _{n_l}}$. If, for short, we denote the morphism $f_2^{\Box n_2}\Box \dots \Box f_l^{\Box n_l}$ by $e:A\to B$ then we need to show that for any positive integer $n$ the morphism $F(f_{\infty }^{\Box n}\Box e)$
is a cofibration in $\bcD ^{\Sigma _n\times \Sigma _{n_2}\times \dots \times \Sigma _{n_l}}$.

Our strategy is to apply Lemma~\ref{ladder} to the category $\bcE=\bcD ^{\Sigma _n\times \Sigma _{n_2}\times \dots \times \Sigma _{n_l}}$, the functors $U=F(\Box (f_{\alpha,0}^{\Box n},e))$, $V=F(X_{\alpha}^{\wedge n}\wedge B)$, and the natural transformation $\psi=F(f_{\alpha,0}^{\Box n}\Box e)$. First we show that $\colim (\psi )$ is nothing but the morphism $F(f_{\infty }^{\Box n}\Box e)$. This is provided by Lemma \ref{3na}, which says that $f^{\Box n}_{\infty }=\colim (f^{\Box n}_{\alpha ,0})$, the commutativity of the functor $F$ with colimits, and the obvious fact that the right $\Box $-multiplication is colimit-commutative too:
  $$
  \colim (\psi )=\colim F(f^{\Box n}_{\alpha ,0}\Box e)=
  $$
  $$
  F(\colim (f^{\Box n}_{\alpha ,0}\Box e))=
  F(\colim (f^{\Box n}_{\alpha ,0})\Box e)=
  F(f^{\Box n}_{\infty }\Box e)\; .
  $$

Next, we have that
  $$
  \psi _0=F(f_{0,0}^{\Box n}\Box e)=F(\id _{X^{\wedge n}}\Box e)=F(\id _{X^{\wedge n}\wedge B})=\id _{F(X^{\wedge n}\wedge B)}
  $$
is a cofibration in $\bcD ^{\Sigma _n\times \Sigma _{n_2}\times \dots \times \Sigma _{n_l}}$. In order to apply Lemma~\ref{ladder} it remains only to show that the universal morphisms $h_{\alpha}$ are cofibrations in $\bcD ^{\Sigma _n\times \Sigma _{n_2}\times \dots \times \Sigma _{n_l}}$. We give an explicit description of $h_{\alpha}$.

Let $r_{\alpha }$ be a push-out of the morphism $f_{\alpha,0}^{\Box n}$ with respect to the universal morphism between colimits $\Box ^n_{n-1}(f_{\alpha,0})\to \Box ^n_{n-1}(f_{\alpha+1,0})$. Applying Lemma \ref{ulitka} to the corresponding push-out square and the morphism $e:A\to B$
we get a push-out square
  $$
  \diagram
  \Box (f_{\alpha ,0}^{\Box n},e) \ar[dd]_-{f_{\alpha ,0}^{\Box n}\Box e}
  \ar[rr]^-{} & & \Box (r_{\alpha },e) \ar[dd]^-{r_{\alpha }\Box e} \\ \\
  X_{\alpha }^{\wedge n}\wedge B \ar[rr]^-{} & & R_{\alpha }\wedge B
  \enddiagram
  $$
Let furthermore $s_{\alpha }$ be the universal morphism from the colimit $R_{\alpha }$ into the wedge-power $X_{\alpha+1}^{\wedge n}$, \label{See D5}so that $f_{\alpha+1,0}^{\Box n}=s_{\alpha }\circ r_{\alpha }$. Applying Lemma \ref{potap} to this composition and the morphism $e:A\to B$, we obtain yet another push-out square \label{See D6}
  $$
  \diagram
  \Box (r_{\alpha },e) \ar[dd]_-{r_{\alpha }\Box e}
  \ar[rr]^-{} & & \Box (f_{\alpha+1 ,0}^{\Box n},e) \ar[dd]^-{\varkappa _{\alpha }} \\ \\
  R_{\alpha }\wedge B \ar[rr]^-{} & & \Box (s_{\alpha },e)
  \enddiagram
  $$
Composing these two push-out squares we obtain that
  $$
  f_{\alpha+1 ,0}^{\Box n}\Box e=
  (s_{\alpha }\Box e)\circ \varkappa _{\alpha }\; .
  $$
This proves that $W_{\alpha}$ from Lemma~\ref{ladder} equals $F(\Box(s_{\alpha},e))$ and $h_{\alpha}$ equals $F(s_{\alpha}\Box e)$ since $F$ commutes with colimits.

By Lemma \ref{at the core}, the morphism $s_{\alpha }$ is the composition
  $$
  f_{\alpha +1,\alpha }^{\Box n}\circ \psht (\cor ^{\Sigma _n}_{\Sigma _{n-1}\times \Sigma _{1}}(f_{\alpha +1,\alpha }^{\Box (n-1)}\Box f_{\alpha ,0}))\circ
  \dots \circ \psht (\cor ^{\Sigma _n}_{\Sigma _{1}\times \Sigma _{n-1}}(f_{\alpha +1,\alpha }\Box f_{\alpha ,0}^{\Box (n-1)}))\; .
  $$
By Lemma \ref{kotofei} the morphism $s_{\alpha }\Box e$ is the composition of push-outs of the morphisms
  $$
  \psht (\cor ^{\Sigma _n}_{\Sigma _{i}\times \Sigma _{n-i}}(f_{\alpha +1,\alpha }^{\Box i}\Box f_{\alpha ,0}^{\Box (n-i)}))\Box e\; ,
  $$
where $0=1,\dots ,n-1$. By Lemma \ref{ulitka},
  $$
  \psht (\cor ^{\Sigma _n}_{\Sigma _{i}\times \Sigma _{n-i}}(f_{\alpha +1,\alpha }^{\Box i}\Box f_{\alpha ,0}^{\Box (n-i)}))\Box e=
  \psht (\cor ^{\Sigma _n}_{\Sigma _{i}\times \Sigma _{n-i}}(f_{\alpha +1,\alpha }^{\Box i}\Box f_{\alpha ,0}^{\Box (n-i)})\Box e)\; .
  $$
Since $e=f_2^{\Box n_2}\Box \dots \Box f_l^{\Box n_l}$, by Lemma \ref{doublecor} we have that
  $$
 \psht (\cor ^{\Sigma _n}_{\Sigma _{i}\times \Sigma _{n-i}}(f_{\alpha +1,\alpha }^{\Box i}\Box f_{\alpha ,0}^{\Box (n-i)}))\Box e=
 $$
  $$
  \cor ^{\Sigma _n\times \Sigma _{n_2}\times \dots \times \Sigma _{n_l}}_{\Sigma _{i}\times \Sigma _{n-i}\times \Sigma _{n_2}\times \dots \times \Sigma _{n_l}}(f_{\alpha +1,\alpha }^{\Box i}\Box f_{\alpha ,0}^{\Box (n-i)}\Box f_2^{\Box n_2}\Box \dots \Box f_l^{\Box n_l})\; .
  $$
Since $F$ commutes with colimits, it follows that for any ordinal $\alpha $, such that $\alpha +1<\lambda $, the morphism $h_{\alpha }$ is a composition of push-outs of the morphisms
  $$
  F(f_{\alpha +1,\alpha }^{\Box i}\Box f_{\alpha ,0}^{\Box (n-i)}\Box f_2^{\Box n_2}\Box \dots \Box f_l^{\Box n_l})\; ,
  $$
where $i=0,\dots ,n-1$. By the inductive hypothesis, any such morphism is a cofibration. Then $h_{\alpha }$ is a cofibration too. As we have shown above, $F(f_{\infty}^{\Box n}\Box e)=\colim (\psi)$. By Lemma \ref{ladder}, this morphism is a cofibration in $\bcD ^{\Sigma _n\times \Sigma _{n_2}\times \dots \times \Sigma _{n_l}}$. This finishes the proof of Theorem \ref{F-GSW}.

\section{K\"unneth towers for cofibre sequences}

Here we prove the existence of special towers of cofibrations connecting symmetric powers in cofibre sequences via the K\"unneth rule, provided (trivial) cofibrations are symmetrizable, Theorem~\ref{kuennethkey}. This suggests to introduce the concept of a categorified $\lambda $-structure in $\bcC $ and $Ho(\bcC )$. Using the results from Section~\ref{symcofib}, we prove the existence of the $\lambda $-structure of left derived symmetric powers provided symmetrizability of generating (trivial) cofibrations in $\bcC $, Theorem~\ref{derived} and Corollary~\ref{kuennethtriang}. An application to categorical finite-dimensionality (with coefficients in $\ZZ $) is given in Corollary~\ref{additivity}.

In a model category $\bcD $, if $X\to Y$ is a cofibration, then let $Y/X$ be the colimit of the diagram $Y\leftarrow X\to *$, and if $X$ and $Y$ are cofibrant, then $X\to Y\to Y/X$ is a cofibre sequence in $\bcD $.

\begin{theorem}
\label{kuennethkey}
Let $\bcC $ be a closed symmetric monoidal model category, and let $X\stackrel{f}{\to }Y\to Z$ be a cofibre sequence in $\bcC $. Then, for any two natural numbers $n$ and $i$, $i\leq n$, there is a cofibration $\Box ^n_{i-1}(f)\to \Box ^n_i(f)$ and a $\Sigma _n$-equivariant isomorphism
  $$
  \Box ^n_i/\Box ^n_{i-1}\simeq \cor ^{\sg _n}_{\sg _{n-i}\times \sg _i}(X^{\wedge(n-i)}\wedge Z^{\wedge i})
  $$
in $\bcC $. If $f$ is a symmetrizable cofibration and all symmetric powers $\Sym ^i(X)$ are cofibrant, then the morphism $\tilde \Box ^n_{i-1}(f)\to \tilde \Box ^n_i(f)$, obtained by passing to the colimit of the action of the symmetric group $\Sigma _n$, is a cofibration, and $\tilde \Box ^n_i/\tilde \Box ^n_{i-1}$ can be computed by K\"unneth's rule,
  $$
  \tilde \Box ^n_i/\tilde \Box ^n_{i-1}\simeq
  \Sym ^{n-i}(X)\wedge \Sym ^i(Z)\; .
  $$
If $f$ is a symmetrizable trivial cofibration, then all the cofibrations $\tilde \Box ^n_{i-1}(f)\to \tilde \Box ^n_i(f)$ are trivial cofibrations.
\end{theorem}

\begin{pf}
The proof is similar to the proof of Lemma~\ref{at the core}. For any $0\le i\le n$ the diagram $X^{\wedge (n-i)}\wedge K^i_{i-1}(f)$ is a subdiagram in $K^n_{i-1}(f)$. Since the wedge product commutes with colimits, we obtain a universal morphism from $X^{\wedge (n-i)}\wedge \Box^i_{i-1}(f)$ to $\Box^n_{i-1}(f)$. Since $X^{\wedge (n-i)}\wedge Y^{\wedge i}$ is a vertex in the diagram $K_i^n(f)$, we have a morphism from $X^{\wedge (n-i)}\wedge Y^{\wedge i}$ to $\Box_i^n(f)$. Finally, we have a standard morphism $X^{\wedge (n-i)}\wedge \Box^i_{i-1}(f)\to X^{\wedge(n-i)}\wedge Y^{\wedge i}$. Collecting these morphisms together we get a commutative diagram
  $$
  \diagram
  X^{\wedge (n-i)}\wedge \Box^i_{i-1}(f) \ar[dd]_-{}\ar[rr]^-{}& & X^{\wedge(n-i)}\wedge Y^{\wedge i} \ar[dd]_-{}\\ \\
  \Box^n_{i-1}(f)   \ar[rr]^-{} & & \Box^n_i(f)
  \enddiagram
  $$
This is a $\Sigma_{n-i}\times \Sigma_{i}$-equivariant commutative diagram, which yields a $\Sigma_n$-equivariant commutative diagram
  $$
  \diagram
  \cor ^{\Sigma _n}_{\Sigma _{n-i}\times \Sigma _{i}}(X^{\wedge (n-i)}\wedge \Box^i_{i-1}(f)) \ar[dd]_-{} \ar[rr]^-{} & & \cor ^{\Sigma _n}_{\Sigma _{n-i}\times \Sigma _{i}}(X^{\wedge(n-i)}\wedge Y^{\wedge i}) \ar[dd]_-{}\\ \\
  \Box^n_{i-1}(f)   \ar[rr]^-{} & & \Box^n_i(f)
  \enddiagram
  $$
Straightforward verification shows that this is a push-out square.

By an axiom of a closed symmetric monoidal model category,
the morphism $\Box^i_{i-1}(f)\to Y^{\wedge i}$ is a cofibration and we have
  $$
  Y^{\wedge i}/\Box^i_{i-1}(f)\simeq Z^{\wedge i}\;.
  $$
By the same axiom, the functor $X^{\wedge (n-i)}\wedge-$ commutes with colimits and preserves cofibrations in $\bcC$ as the object $X$ is cofibrant. Also the same is true for the functor $\cor$, because this is a bouquet in the category $\bcC$. This implies the needed statements about $\Box^n_i(f)$.

Now suppose that $f$ is a symmetrizable (trivial) cofibration. Recall that taking a quotient over ${\Sigma_n}$ commutes with colimits being a left adjoint functor. This gives a push-out square
  $$
  \diagram
  \Sym ^{n-i}(X)\wedge \tilde \Box ^i_{i-1}(f) \ar[dd]_-{} \ar[rr]^-{} & & \Sym ^{n-i}(X)\wedge \Sym ^i(Y)\ar[dd]_-{}
  \\ \\
  \tilde \Box^n_{i-1}(f) \ar[rr]^-{} & & \tilde \Box ^n_i(f)
  \enddiagram
  $$
The symmetric power $\Sym ^{n-i}(X)$ is cofibrant by assumption. The morphism $\tilde \Box ^i_{i-1}(f)\lra \Sym ^i(Y)$ is a (trivial) cofibration by assumption. Therefore the top morphism is a (trivial) cofibration. This finishes the proof.
\end{pf}

\begin{corollary}
\label{moscowdust}
Let $f$ be a cofibration between cofibrant objects in $\bcC $. Suppose that $f$ is a symmetrizable cofibration, and all symmetric powers $\Sym ^n(X)$ are cofibrant in $\bcC $. Then $f$ is a symmetrizable trivial cofibration if and only if $\Sym ^n(f)$ is a trivial cofibration for all $n\geq 0$.
\end{corollary}

\begin{pf}
Consider the sequence of cofibrations
  $$
  \Sym ^n(X)=\tilde \Box ^n_0(f)\to \tilde \Box ^n_1(f)\to \dots \to \tilde \Box ^n_i(f)\to \dots \to \tilde \Box ^n_n(f)=
  \Sym ^n(Y)
  $$
provided by Theorem \ref{kuennethkey}. The composition of all the cofibrations in that chain is $\Sym ^n(f)$. If $f$ is a symmetrizable trivial cofibration then each cofibration
  $$
  \tilde \Box ^n_i(g)\lra \tilde \Box ^n_{i+1}(f)
  $$
is a trivial cofibration by Theorem \ref{kuennethkey}. Then so is $\Sym ^n(f)$. Conversely, suppose $\Sym ^n(f)$ is a trivial cofibration for any $n\geq 0$. Let's prove by induction on $n$ that the morphism $\tilde \Box ^n_{n-1}(f)\to \Sym ^n(Y)$ is a trivial cofibration, i.e. that $f$ is a symmetrizable trivial cofibration. The base of induction, $n=1$, is obvious. To make the inductive step we observe that in proving Theorem \ref{kuennethkey} we deduce that $\tilde \Box ^n_{i-1}(f)\to \tilde \Box ^n_i(f)$ is a trivial cofibration by only using that $\tilde \Box ^i_{i-1}(f)\to \Sym ^i(Y)$ is a trivial cofibration for $i<n$. But the last condition holds by the induction hypothesis. Thus, all morphisms $\tilde \Box ^n_{i-1}(f)\to \tilde \Box ^n_i(f)$, $1\leq i\leq n-1$, are trivial cofibrations. Then $\tilde \Box ^n_{n-1}(f)\to \Sym ^n(Y)$ is a weak equivalence by $2$-out-of-$3$ property for weak equivalences. Finally, by the assumption of the lemma, $\tilde \Box ^n_{n-1}(f)\to \Sym ^n(Y)$ is a cofibration, and so a trivial cofibration.
\end{pf}

For any closed symmetric monoidal model category $\bcC $ with monoidal unit $\uno $, a {\it $\lambda $-structure} on $\bcC $ is a sequence $\Lambda $ of endofunctors $\Lambda ^n:\bcC \to \bcC $, $n=0,1,2,\dots $, such that (i) $\Lambda ^0=\uno $, $\Lambda ^1=\Id $, (ii) $\Lambda ^n(\emptyset )=\emptyset $ for all $n\geq 1$, (iii) to each cofibre sequence $X\to Y\to Z$ in $\bcC $ and any $n$ there is associated a unique sequence of cofibrations between cofibrant objects
  $$
  \Lambda ^n(X)=L^n_0\to L^n_1\to \dots \to L^n_i\to \dots \to
  L^n_n=\Lambda ^n(Y)\; ,
  $$
called {\it K\"unneth tower}, such that for each index $0\leq i\leq n$ one has isomorphisms
 $$
 L^n_i/L^n_{i-1}\simeq \Lambda ^{n-i}(X)\wedge \Lambda ^i(Z)\; ,
 $$
and (iv) such towers are functorial in cofibre sequences in the obvious sense. In particular, the endofunctors $\Lambda ^n$ preserve cofibrant objects in $\bcC $. In these terms, Theorem \ref{kuennethkey} says that if cofibrations in $\bcC $ are symmetrizable, then symmetric powers yield a specific $\lambda $-structure in $\bcC $. We will call it the canonical $\lambda $-structure of symmetric powers in $\bcC $.

A cofibre sequence in $Ho(\bcC )$ is a sequence of two composable morphisms, which is isomorphic to a sequence coming from a cofibre sequence in $\bcC $ via the functor from $\bcC $ to $Ho(\bcC )$. A similar definition of a $\lambda $-structure can be then given also in $Ho(\bcC )$. If $\Lambda ^*$ is a $\lambda $-structure on $\bcC $ such that $\Lambda ^n$ takes trivial cofibrations between cofibrant objects into weak equivalences, then by Ken Brown's lemma the left derived functors $L\Lambda ^n$ exist, and their collection gives a $\lambda $-structure in $Ho(\bcC )$. Combining this with Corollary \ref{moscowdust}, we obtain the following important result.

\begin{theorem}
\label{derived}
Let $\bcC $ be a closed symmetric monoidal model category, such that all cofibrations are symmetrizable, and all trivial cofibrations between cofibrant objects are symmetrizable in $\bcC $. Then symmetric powers $\Sym ^n$ take weak equivalences between cofibrant objects to weak equivalences, and the canonical $\lambda $-structure of symmetric powers in $\bcC $ induces the $\lambda $-structure of left derived symmetric powers $\LSym ^n$ in $Ho(\bcC )$.
\end{theorem}

\begin{remark}
\label{pinguin}
{\rm Let $\bcC $ be a closed symmetric monoidal model category cofibrantly generated by a set of generating cofibrations $I$ and a set of generating trivial cofibrations $J$. Suppose $I$ and $J$ are both symmetrizable. Then by Corollaries~\ref{cofad} and~\ref{symcof}, the conditions of Theorem~\ref{derived} are satisfied.}
\end{remark}

Assume now that $\bcC $ is moreover pointed. According to~\cite{Hovey1}, there is a well-defined $S^1$-suspension functor $-\wedge^L{S^1}:\bcT \to \bcT $ provided by a $Ho(\SSets_*)$-module structure on the homotopy category $\bcT=Ho(\bcC)$. If it is an autoequivalence on $\bcT $ then $\bcT $ is triangulated, where the translation functor $[1]$ is given by $-\wedge^LS^1$ and distinguished triangles come from cofibre sequences in $\bcC $, see Chapter 7 in loc.cit. Since $\bcC $ is closed symmetric monoidal, so is the triangulated category $\bcT $, and the functor $\bcC \to \bcT $ is monoidal as well, see Section 4.3, loc.cit. We will denote the monoidal product in $\bcT $ also by $\wedge $. A $\lambda $-structure in $\bcT =Ho(\bcC )$ associates K\"unneth towers to distinguished triangles in $Ho(\bcC )$ in the functorial way. Using Theorem \ref{derived} we obtain the following result.

\begin{corollary}
\label{kuennethtriang}
Let $\bcT $ be the homotopy category of a pointed closed symmetric mono\-idal model category $\bcC $, so that $\bcT $ is triangulated. Assume, furthermore, that all cofibrations are symmetrizable, and all trivial cofibrations between cofibrant objects are symmetrizable in $\bcC $. Then $\bcT $ inherits the canonical $\lambda $-structure of left derived symmetric powers associated to distinguished triangles in $\bcT $.
\end{corollary}

As a straightforward consequence of Corollary \ref{kuennethtriang} we also get the following corollary.

\begin{corollary}
\label{additivity} Let $\bcT $ be as above, and let $X\stackrel{f}{\to }Y\to Z\to X[1]$ be a distinguished triangle in $\bcT $. If there exist natural numbers $n'$ and $m'$ such that $L\Sym ^n(X)=0$ for all $n\ge n'$ and $L\Sym ^m(Z)=0$ for all $m\ge m'$, then there exists $N'$ such that $L\Sym ^N(Y)=0$ for all $N\ge N'$.
\end{corollary}

\section{Localization of symmetric powers}
\label{section-localization}

Our main result in this section is a necessary and sufficient condition for trivial cofibrations to remain symmetrizable after Bousfield localization of $\bcC $ with respect to a set of morphism $S$, Theorem~\ref{localization}. This will be applied to the localization by an abstract interval in Section~\ref{moncylinders}.

Let $\bcC $ be a left proper cellular model category, and denote the model structure in $\bcC $ by $\bcM $. Recall that left properness means that the push-out of a weak equivalence along a cofibration is a weak equivalence. Cellularity means that $\bcC $ is cofibrantly generated by a set of generating cofibrations $I$ and a set of trivial generating cofibrations $J$, the domains and codomains of morphisms in $I$ are all compact relative to $I$, the domains of morphisms in $J$ are all small relative to the cofibrations, and cofibrations are effective monomorphisms. Further details about the notions engaged here can be found in \cite{Hovey1}, \cite{Hovey2} or \cite{Hirsch}. Let $S$ be a set of morphisms in $\bcC $. Recall that an object $Z$ in $\bcC $ is called $S$-local if it is fibrant, and for any morphism $f:A\to B$ in $S$ the morphism between function complexes
  $$
  \map (f,Z):\map (B,Z)\lra \map (A,Z)
  $$
is a weak equivalence in $\SSets $. The description of the bi-functor $\map(-,-)$ can be found, for example, in Chapter 5 of \cite{Hovey1}. A morphism $g:X\to Y$ in $\bcC $ is called an $S$-local equivalence if
  $$
  \map (g,Z):\map (Y,Z)\lra \map (X,Z)
  $$
is a weak equivalence in $\SSets $ for any $S$-local object $Z$ in $\bcC $. Since $\map(-,-)$ is a homotopic invariant, each weak equivalence is an $S$-local equivalence in $\bcC $.

By the main result in \cite{Hirsch}, under the above assumptions there exists a new left proper cellular model structure $\bcM _S$ on $\bcC $ whose cofibrations remain unchanged and new weak equivalences $W_S$ are exactly $S$-local equivalences in $\bcC $. The new model structure is cofibrantly generated by the set of generating cofibrations $I$ and a new set of generating trivial cofibrations $J_S$, and it is called a (left) Bousfield localization of $\bcM $ with respect to $S$. The symbol $\bcC _S$ will be used to denote the same category $\bcC $, endowed with the new model structure $\bcM _S$. Then $\bcC _S$ is a (left) Bousfield localization of $\bcC $ with respect to $S$.

Let $F:\bcC\to\bcD$ be a left Quillen functor such that $F(Q(f))$ is a weak equivalence for any $f\in S$, where $Q$ denotes the cofibrant replacement functor in the model structure $\bcM$. Then $F$ is still left Quillen with respect to the localized model structure $\bcM_S$ and has a left derived with respect to $\bcM_S$, see Proposition 3.3.18(1) in ~\cite{Hirsch}. Our main goal is to construct left derived symmetric powers for the localized model category. Since symmetric powers do not admit right adjoints in general, and thus are not left Quillen, we need to strengthen the above result.

Given a functor $F:\bcC\to \bcD$ to a model category, we say that a morphism $g$ in $\bcC$ is $F$-acyclic if $g$ is a cofibration between cofibrant objects in $\bcC$ and $F(g)$ is a weak equivalence in $\bcD$. Obviously, given composable cofibrations between cofibrant objects, their $F$-acyclicity has $2$-out-of-$3$ property. By an $S$-local cofibration we mean a cofibration which is an $S$-local equivalence in $\bcC $.

\begin{theorem}
\label{Flocalization}
Let $F:\bcC\to \bcD$ be a functor to a model category such that all trivial cofibrations between cofibrant objects in $\bcM$ are $F$-acyclic and $F(Q(f))$ is a weak equivalence in $\bcD$ for any $f\in S$. In addition, suppose that $F$-acyclic morphisms are closed under transfinite compositions and push-outs with respect to morphisms to cofibrant objects. Then all $S$-local cofibrations between cofibrant objects are $F$-acyclic. In particular, by Ken Brown's lemma, the left derived functor $LF:Ho(\bcC _S)\to Ho(\bcD)$ exists and commutes with the localization functor $Ho(\bcC )\to Ho(\bcC _S)$.
\end{theorem}

To prove Theorem \ref{Flocalization} we first need to prove an auxiliary result. Fix a left framing on $\bcC$, see Definition 5.2.7 in~\cite{Hovey1}. Thus, for each cofibrant object $X$ one has the functorial cofibrant replacement $X^*$ of the constant cosimplicial object given by $X$, with respect to the Reedy model structure on the category of cosimplicial objects in $\bcC $. The product $X\wedge K$ in $\bcC $ of $X$ and a simplicial set $K$ is then defined as the product $X^*\wedge K$. For any morphism $g$ in $\bcC $, and a morphism $i$ in $\SSets $, we have their push-out product $g\Box i$. For a non-negative integer $m$ let $i_m:\partial\Delta[m]\to\Delta[m]$ be the embedding of the boundary into the $m$-th simplex.

\begin{lemma}
\label{vulcano}
Let $F$ be as in Theorem~\ref{Flocalization}. Then $F$-acyclic morphisms are closed under taking products with simplicial sets generated by finitely many non-degenerate simplices and push-out products with the embeddings $i_m$.
\end{lemma}

\begin{pf}
Let $g:X\to Y$ be an $F$-acyclic morphism in $\bcC $, and let $K$ be a simplicial set. Let $m$ be the maximal dimension of non-degenerated simplices in $K$, and $n$ be the number of such simplices. We apply induction with respect to the lexicographical order on the set of pairs $(m,n)$. Represent $K$ as a simplicial set obtained by gluing an $m$-dimensional simplex to another simplicial set $K'$ having one simplex less than in $K$, i.e. $i:K'\to K$ is a push-out of $i_m$. By Corollary~5.4.4(1) in~\cite{Hovey1}, the functor $X\wedge -$ is left Quillen. It follows that the morphism $X=X\wedge \Delta [0]\to X\wedge \Delta [m]$ is a trivial cofibration between cofibrant objects, whence it is an $F$-acyclic morphism by the assumption on $F$. Since the same is true for $Y\to Y\wedge \Delta [m]$, we see that the morphism $X\wedge \Delta [m]\to Y\wedge \Delta [m]$ is also $F$-acyclic by $2$-out-of-$3$ property for acyclicity. The morphism $X\wedge \Delta [m]\to \Box (g,i_m)$ is a push-out of $g\wedge \id _{\partial \Delta [m]}$. Then it is $F$-acyclic by the push-out property for acyclicity and the induction. Using $2$-out-of-$3$ property once again, we conclude that $g\Box i_m$ is $F$-acyclic. The obvious commutative diagram
  $$
  \diagram
  \Box(g,i_m)\ar[dd]_-{} \ar[rr]^-{} & & \Box(g,i) \ar[dd]^-{} \\ \\
  Y\wedge \Delta [m] \ar[rr]^-{} & & Y\wedge K
  \enddiagram
  $$
is a push-out square, and all objects in it are cofibrant. Then $g\wedge \id _K=\psht (g\Box i_m)\circ \psht (g\wedge \id _{K'})$. The induction and the push-out property finish the proof of the lemma.
\end{pf}

Using a standard transfinite composition argument and Lemma~\ref{vulcano} one can also show that $F$-acyclic morphisms are closed under products with arbitrary simplicial sets and push-out products with arbitrary cofibrations between simplicial sets, though we do not need this. Now we can prove Theorem~\ref{Flocalization}.

\medskip

\begin{pf}
By Ken Brown's lemma and the assumption of the theorem, $F$ sends weak equivalences between cofibrant objects in $\bcC$ to weak equivalences in $\bcD$. For any morphism $f:A\to B$ of $S$ decompose $Q(f)$ into a cofibration $f':Q(A)\to C$ and a trivial fibration $f'':C\to Q(B)$. Since $f''$ is a weak equivalence between cofibrant objects in $\bcC$, $F(f'')$ is a weak equivalence. Let $S'=\{f'|f\in S\}$. Then all morphisms in $S'$ are $F$-acyclic. Since $\bcM _S=\bcM _{S'}$, without loss of generality, one may assume that all morphisms in $S$ are $F$-acyclic.

Next, let $g:X\to Y$ be an $S$-local cofibration between cofibrant objects in $\bcC $. Let $L_S(g):L_S(X)\to L_S(Y)$ be the fibrant replacement of the morphism $g$ with respect to the localized model structure $\bcM _S$. Then $L_S(g)$ is a weak equivalence between cofibrant objects in $\bcC $ , whence $F(L_S(g))$ is a weak equivalence. This gives that the theorem will be proved as soon as we prove that $X\to L_S(X)$ is $F$-acyclic.

By Theorem~4.3.1 in~\cite{Hirsch}, the morphism $X\to L_S(X)$ is a relative $\Lambda$-cell complex, where $\Lambda$ consists of morphisms that are either trivial cofibrations between cofibrant objects, or being composed with a weak equivalence between cofibrant objects are equal to $f\Box (\partial \Delta [n]\to \Delta [n])$, where $f$ runs $S$. By Lemma~\ref{vulcano} and $2$-out-of-$3$ property, all morphisms in $\Lambda$ are $F$-acyclic and the theorem is proved by the assumptions on~$F$.
\end{pf}

Now we need to investigate when the compatibility between the model and monoidal structures is stable under localization. For that we will use the same idea as in the proofs of Theorems 6.3 and 8.11 in \cite{Hovey2}. Since now we assume that $\bcC $ is a closed symmetric monoidal left proper cellular model category cofibrantly generated by the set of generating cofibrations $I$ and the set of generating trivial cofibrations $J$, such that the domains and codomains of the cofibrations from $I$ are cofibrant. Let also $Q$ be the cofibrant replacement in $\bcC $, and so in $\bcC _S$.

\begin{lemma}
\label{locmonoidal}
The model structure $\bcM _S$ is compatible with the monoidal structure in $\bcC $ if and only if for any $X\in \dom (I)\cup \codom (I)$ and for any $f\in S$ the product $X\wedge Q(f)$ is an $S$-local equivalence.
\end{lemma}

\begin{pf}
If $\bcM _S$ is compatible with the monoidal structure in $\bcC $, then $X\wedge Q(f)$ is an $S$-local equivalence by the axioms of a monoidal model category. Conversely, let $h:X\to Y$ be a cofibration in $I$ and let $g:Z\to U$ be an $S$-local cofibration in $\bcC $. By Corollary 4.2.5 in \cite{Hovey1} all we need to show is that $h\Box g$ is an $S$-local cofibration in $\bcC $. By Theorem 2.2 in \cite{Hovey2}, the functors $X\wedge -$ and $Y\wedge -$ are left Quillen with respect to the localized model structure $\bcM _S$. This is because $X\wedge Q(f)$ is an $S$-local equivalence for any $f$ from $S$, and the same for $Y\wedge Q(f)$. Since $X\wedge -$ is left Quillen and $g:Z\to U$ is an $S$-local cofibration, the morphism $\id _X\wedge g:X\wedge Z\to X\wedge U$ is an $S$-local cofibration. Since trivial cofibrations are stable under pushouts, the pushout $Y\wedge Z\to \Box (h,g)$ is an $S$-local cofibration too. The morphism $\id _Y\wedge g:Y\wedge Z\to Y\wedge U$ is an $S$-local cofibration, because $Y\wedge -$ is left Quillen. Since $\id _Y\wedge g$ is the composition $Y\wedge Z\to \Box (h,g)\stackrel{h\Box g}{\lra }Y\wedge U$, we obtain that $h\Box g$ is an $S$-local equivalence. Moreover, $h\Box g$ is a cofibration since $\bcC $ monoidal model.
\end{pf}

\begin{remark}
\label{chernika}
{\rm An analogous statement is true if one considers two model categories $\bcC $ and $\bcD $, such that $\bcC $ is closed monoidal and $\bcD $ is a $\bcC $-module, see Definition 4.2.18 in \cite{Hovey1}.}
\end{remark}

\begin{theorem}
\label{localization}
Let $\bcC $ and $S$ be such that $\bcM _S$ is compatible with the monoidal structure in $\bcC $, and assume furthermore that all cofibrations are symmetrizable and all trivial cofibrations between cofibrant objects are symmetrizable in $\bcC$. Assume also that for any $f\in S$ and any natural $n$ the morphism $\Sym ^n(Q(f))$ is an $S$-local equivalence. Then all $S$-local cofibrations between cofibrant objects are symmetrizable in $\bcC_S$. The left derived functors $L\Sym ^n$ exist on $Ho(\bcC _S)$, and they commute with the localization functor $Ho(\bcC )\to Ho(\bcC _S)$.
\end{theorem}

\begin{pf}
Let $F$ be the composition of $\Sym^n$ and the localization functor $\bcC\to \bcC_S$ (this is just the identity functor considered as a functors between two different model structures).
Since cofibrations in $\bcC$ are symmetrizable, they are so in $\bcC_S$. By Corollary~\ref{moscowdust} applied to $\bcC_S$, we see that trivial symmetrizable cofibrations between cofibrant objects in $\bcC_S$ are the same as $F$-acyclic morphisms in $\bcC$. So, it is enough to show that $S$-local cofibrations are $F$-acyclic.

By Theorem~\ref{F-GSW} applied to the category $\bcC_S$, $F$-acyclic morphisms are closed under transfinite compositions and under push-outs with respect to morphisms to cofibrant objects (actually, to treat transfinite compositions it is enough to use Lemma~\ref{aux} and Theorem~\ref{kuennethkey}). We conclude by Theorem~\ref{Flocalization}.
\end{pf}

\section{Localization w.r.t. diagonalizable intervals}
\label{moncylinders}

Let us consider more closely the important particular case of the left Bousfield localization contracting an object $A$ into a point. If $A$ is what we call a diagonalizable interval, then, using the results from Section~\ref{section-localization}, we prove that trivial cofibrations (between cofibrant objects) remain symmetrizable in the localized category, Theorem~\ref{locdiagint}. As a consequence, we obtain that left derived symmetric powers exist in the homotopy category of the localized category $\bcC _S$, provided we have them in $Ho (\bcC )$, see Corollary~\ref{corol-locint}. This will be applied in Section~\ref{A1homotopy} to the unstable motivic homotopy theory, where $A$ will be the affine line $\AF ^1$ over a base.

Let $\bcC $ be a closed symmetric monoidal left proper cellular model category $\bcC $ cofibrantly generated by the set of generating cofibrations $I$ and the set of generating trivial cofibrations $J$, such that the domains and codomains of the cofibrations from $I$ are cofibrant. Let $A$ be a cofibrant object, let $\pi :A\to \uno $ be a morphism in $\bcC $, and let
  $$
  S=\{ X\wedge A\stackrel{\id _X\wedge \pi }{\lra }X\; |\; X\in \dom(I)\cup \codom (I)\} \; .
  $$

For any morphism $f:X\to Y$ and any object $Z$ in $\bcC $ the morphism $\iHom (f,Z):\iHom (Y,Z)\to \iHom (X,Z)$ in $\bcC $, as well as the morphism $\map (f,Z):\map (Y,Z)\to \map (X,Z)$ in $\SSets $, will be denoted by $f^*$.

Notice that, if $X\in \dom(I)\cup \codom (I)$, it is cofibrant, and since $A$ is cofibrant, the monoidal product $X\wedge A$ is cofibrant too.

\begin{lemma}
\label{ryzhik}
An object $Z$ in $\bcC $ is $S$-local if and only if $Z$ is fibrant in $\bcC $ and the induced morphism $\pi ^*:Z\simeq \iHom (\uno ,Z)\to \iHom (A,Z)$ is a weak equivalence in $\bcC $.
\end{lemma}

\begin{pf}
Let $X\in \dom (I)\cup \codom (I)$. If $\pi ^*$ is a weak equivalence, the morphism $\map (X,\pi^*):\map (X,Z)\to \map (X,\iHom (A,Z))$ is a weak equivalence of simplicial sets. If $Z$ is fibrant, then the simplicial sets $\map (X,\iHom (A,Z))$ and $\map(X\wedge A,Z)$ are canonically weak equivalent, since the objects $X$ and $A$ are cofibrant in $\bcC $. The composition of the morphism $\map (X,\pi ^*)$ with this weak equivalence equals to the morphism $(\id _X\wedge \pi )^*:\map (X,Z)\to \map (X\wedge A,Z)$, so that $(\id _X\wedge \pi )^*$ is a weak equivalence of simplicial sets as well. By definition, it means that $Z$ is $S$-local. Conversely, if $Z$ is $S$-local, the morphism $(\id _X\wedge \pi )^*$ and so $\map (X,\pi ^*)$ are weak equivalences of simplicial sets. Then $Z\simeq \iHom (\uno ,Z)\stackrel{\pi ^*}{\to }\iHom (A,Z)$ is a weak equivalence in $\bcC $ by Proposition 3.2 in \cite{Hovey2}.
\end{pf}

\begin{lemma}
\label{kolobok}
If $Y$ is a cofibrant object in $\bcC $, the morphism $Y\wedge A\stackrel{\id _Y\wedge \pi }{\lra }Y\wedge \uno \simeq Y$ is an $S$-local equivalence, i.e. a weak equivalence in $\bcC _S$.
\end{lemma}

\begin{pf}
For any $S$-local object $Z$ the morphism $\pi ^*:Z\to\iHom (A,Z)$ is a weak equivalence by Lemma \ref{ryzhik} so that $\map (Y,\pi ^*)$ is a weak equivalence of simplicial sets. As in the proof of Lemma \ref{ryzhik} this implies that $(\id _Y\wedge \pi )^*$ is a weak equivalence of simplicial sets for any $S$-local $Z$. This means that the morphism $Y\wedge A\stackrel{\id _Y\wedge \pi }{\lra }Y$ is an $S$-local equivalence.
\end{pf}

\begin{proposition}
\label{yabloko}
Let $\bcC $ and $S$ be as above. Then the model structure $\bcM _S$ is compatible with the monoidal structure in $\bcC $.
\end{proposition}

\begin{pf}
Let $X$ be an object in $\dom (I)\cup \codom (I)$ and let $f$ be a morphism from the set $S$. By definition, there exists $W\in \dom (I)\cup \codom (I)$, such that $f=\id _W\wedge \pi :W\wedge A\to W$. Smashing with $X$ we obtain the morphism $\id _X\wedge f:X\wedge W\wedge A\to X\wedge W$. Applying Lemma \ref{kolobok} to $Y=X\wedge W$ we obtain that $\id _X\wedge f$ is a weak equivalence in $\bcC $. Hence, the category $\bcC $ and the set $S$ satisfy the conditions of Lemma \ref{locmonoidal}. Notice that the cofibrant replacements can be ignored here because $X$ and $W$ are in $\dom (I)\cup \codom (I)$, so that they are cofibrant, and $A$ is cofibrant too.
\end{pf}

Notice that the proof of Proposition \ref{yabloko} follows closely the proofs of Theorems 6.3 and 8.11 in \cite{Hovey2}.

Our aim is now to apply Theorem \ref{localization} to $\bcC _S$ with $S$ as above. For this we need to impose more conditions on the morphism $\pi $. Suppose we are given with two morphisms $i_0,i_1:\uno \to A$, such that $\pi \circ i_0=\pi \circ i_1=\id _{\uno }$. If $f,g:X\rightrightarrows Y$ are two morphisms from $X$ to $Y$ in $\bcC $, then we say that $f$ and $g$ are $A$-homotopic if there is a morphism $H:X\wedge A\to Y$, such that $H\circ (\id _X\wedge i_0)=f$ and $H\circ (\id _X\wedge i_1)=g$. If $f:X\to Y$ and $g:Y\to X$ are two morphisms in opposite directions, such that $g\circ f$ is $A$-homotopic to $\id _X$ and $f\circ g$ is $A$-homotopic to $\id _Y$, then $f$ and $g$ are mutually inverse $A$-homotopy equivalences in $\bcC $.

Following \cite{MorelVoevodsky}, we will be saying that $\pi $ is an {\it interval} if there exists a morphism $\mu :A\wedge A\to A$, such that $\mu \circ (\id _A\wedge i_0)=i_0\circ \pi $ and $\mu \circ (\id _A\wedge i_1)=\id _A$ as morphisms from $A$ to itself.

\begin{lemma}
\label{Whitehead}
Let $\pi :A\to \uno $ be an interval in $\bcC $. Then, for any cofibrant object $X$ in $\bcC $, the morphism $\id _X\wedge \pi :X\wedge A\to X\wedge \uno \simeq X$ is an $A$-homotopy equivalence in $\bcC $.
\end{lemma}

\begin{pf}
From the definition of an interval, it follows that $(\id _X\wedge \pi )\circ (\id _X\wedge i_0)=\id _X$. Let $H=\id _X\wedge \mu $, where $\mu $ is taken from the definition of an interval for $A$. Then $(X\wedge A)\wedge A\simeq X\wedge (A\wedge A)\stackrel{\id _X\wedge \mu }{\lra }X\wedge A$ is an $A$-homotopy from $(\id _X\wedge i_0)\circ (\id _X\wedge \pi )$ to $\id _{X\wedge A}$.
\end{pf}

We will say that the object $A$, together with the morphisms $i_0,i_1:\uno \to A$, is {\it diagonalizable} if $A$ is a symmetric co-algebra (possibly, without a co-unit), i.e. there exists a morphism $\delta :A\to A\wedge A$, such that the compositions $(\id _A\wedge \delta )\circ \delta $ and $(\delta \wedge \id _A)\circ \delta $ coincide, $\ttt \circ \delta =\delta $, where $\ttt :A\wedge A\to A\wedge A$ is the transposition in $\bcC $, and there are two equalities $\alpha \circ i_0=(i_0\wedge i_0)\circ \xi $ and $\alpha \circ i_1=(i_1\wedge i_1)\circ \xi $, where $\xi $ is the inverse to the obvious isomorphism $\uno \wedge \uno \stackrel{\sim }{\to } \uno $. By co-associativity, we have also the morphisms $\delta _n:A\to A^{\wedge n}$ obtained by iterating $\delta $. The following lemma is a straightforward generalization of Lemma \ref{Key Lemma}, where $\Delta [1]$ is being replaced by an abstract diagonalizable object $A$.

\begin{lemma}
\label{Strong Zero}
Let $A$ be diagonalizable. Then, for any two $A$-homotopic morphisms $f,g:X\rightrightarrows Y$, and for any positive integer $n$, the morphisms $\Sym ^n(f)$ and $\Sym ^n(g)$ are $A$-homotopic in $\bcC $.
\end{lemma}

\begin{example}
{\rm Let $\bcC $ be as above and assume furthermore that $\bcC $ is simplicial, and that the structures are compatible with each other. Consider the functor $\SSets \to \bcC $ sending a simplicial set $K$ into the object $\uno \wedge K$, and the same on morphisms. Let $\pi :A\to \uno $ be the image of the morphism $\de [1]\to \de [0]$ under this functor. Then $\pi $ is a diagonalizable interval in $\bcC $, where the morphism $\mu :\Delta [1]\times \Delta [1]\to \Delta [1]$ is induced by the multiplication $[1]\times [1]\to [1]$.
}
\end{example}

\begin{example}
\label{A1interval}
{\rm Let $B$ be a Noetherian separated scheme of finite Krull dimension, and let $\bcC $ be the category $\deop Pre(\Sm /B)$ of simplicial presheaves on the category of smooth schemes of finite type over $B$ endowed with the stalk-wise model structure with respect to the Nisnevich or \'etale topology. By abuse of notation, denote by $\AF ^1$ the simplicial presheaf represented by the affine line $\AF ^1_B$ over $B$. The monoidal unit $\uno $ is represented by $B$, as a scheme over itself. The structural morphism $\pi :\AF ^1\to \uno $ is then a diagonalizable interval in $\bcC $, where $\mu :\AF ^1\wedge \AF ^1\to \AF ^1$ is the multiplication induced by the fibre-wise multiplication in $\AF ^1_B$, see \cite{MorelVoevodsky}.
}
\end{example}

Now we are ready to prove the main result of this section.

\begin{theorem}
\label{locdiagint}
Let $\bcC $ be a closed symmetric monoidal left proper cellular model category $\bcC $ cofibrantly generated by the set of generating cofibrations $I$ and the set of generating trivial cofibrations $J$, such that the domains and codomains of the cofibrations from $I$ are cofibrant, and the sets $I$ and $J$ are both symmetrizable. Let $A$ be a cofibrant object and let $\pi :A\to \uno $ be a diagonalizable interval in $\bcC $. Let also $S=\{ X\wedge A\stackrel{\id \wedge \pi }{\lra }X\; |\; X\in \dom(I)\cup \codom (I)\} $ be the set of morphisms in $\bcC $. Then all $S$-local cofibrations between cofibrant objects are symmetrizable.
\end{theorem}

\begin{pf}
By Proposition \ref{yabloko} and Theorem~\ref{F-GSW}, $\bcC $ and $S$ satisfy the first two assumptions of Theorem \ref{localization}, so that we only need to show that they satisfy the third assumption of it. By Theorem \ref{derived}, symmetric powers preserve weak equivalences between cofibrant objects in $\bcC $. This is why, for any $f\in S$, the morphism $\Sym ^n(Q(f))$ is an $S$-local equivalence if and only if the morphism $\Sym ^n(f)$ is an $S$-local equivalence in $\bcC $.

Let now $f$ be the morphism $\id _X\wedge \pi :X\wedge A\to X\wedge \uno \simeq X$ is $S$, where $X\in \dom (I)\cup \codom (I)$. Then $f=\id _X\wedge \pi $ is an $A$-homotopy equivalence by Lemma \ref{Whitehead}. By Lemma \ref{Strong Zero}, $\Sym ^n(f)$ is an $A$-homotopy equivalence too. Since $I$ is symmetrizable, $\Sym ^n(X\wedge A)$ and $\Sym ^n(X)$ are cofibrant by Corollary \ref{symcof}, because $X$ and $A$ are cofibrant.

By Proposition \ref{yabloko}, $\id _Y\wedge \pi $ is an $S$-local equivalence for any cofibrant $Y$. This implies that $A$-homotopic morphisms between cofibrant objects are the same in the homotopy category $Ho(\bcC _S)$. Therefore, an $A$-homotopy between cobibrant objects is an $S$-local equivalence in $\bcC $. Summing up, we obtain that $\Sym ^n(f)$ is an $S$-local equivalence in $\bcC $.
\end{pf}

\begin{corollary}
\label{corol-locint}
If the assumptions of Theorem \ref{locdiagint} are satisfied, the left derived functors $L\Sym ^n$ exist on $Ho(\bcC _S)$ and commute with $Ho(\bcC )\to Ho(\bcC _S)$.
\end{corollary}

\begin{pf}
Follows from Theorem \ref{locdiagint} and Theorem \ref{derived}.
\end{pf}

\section{Positive model structures on spectra}
\label{sectionpositive}

Now we are going to study symmetric powers in stable categories. The main tool here is the idea of a positive model structure. In this section we will construct positive model structures for abstract symmetric spectra with the usual notion of stable weak equivalences, see Theorem~\ref{positive}. Positive model structures will be used in Section~\ref{rybakit}.

Let $\bcC $ be a closed symmetric monoidal model category which is, moreover, left proper and cellular model category. Suppose in addition that all domains of the generating cofibrations in $I$ are cofibrant. Let $T$ be a cofibrant object in $\bcC $. As it was shown in \cite{Hovey2}, with the above collection of structures imposed upon $\bcC $ there is a passage from $\bcC $ to a category
   $$
  \bcS =\Spt ^{\sg }(\bcC ,T)
  $$
of symmetric spectra over $\bcC $ stabilizing the functor
  $$
  -\wedge T:\bcC \lra \bcC \; .
  $$

Let's remind the basics of this construction for reader's sake. Let $\sg $ be a disjoint union of symmetric groups $\Sigma_n$ for all $n\ge 0$, where
$\Sigma_0$ is the permutation of the empty set, so, isomorphic to $\sg_1$, and all groups are considered as one object categories. Let $\bcC ^{\sg }$ be the category of symmetric sequences over $\bcC $, i.e. functors from $\sg$ to $\bcC$. Explicitly, a symmetric sequence is a collection $(X_0,X_1,X_2,\dots)$ of objects in $\bcC$ together with the action of $\sg_n$ on $X_n$ for each $n\geqslant 1$. Since $\bcC $ is closed symmetric monoidal, so is the category $\bcC ^{\sg }$ with the monoidal product given by the formula
  $$
  (X\wedge Y)_n=\vee _{i+j=n}\sg _n\times _{\sg _i\times \sg _j}(X_i\wedge Y_j)\; ,
  $$
where for any group $G$ and a subgroup $H$ in $G$ the functor $G\times _H-$ is the functor $\cor ^G_H$ described in Section \ref{boksikcalculus}, see \cite{HSS} or \cite{Hovey2}. The restriction to the $n$-th slice of the symmetry isomorphism $X\wedge Y\simeq Y\wedge X$ is equal to the product of the right translation
  $$
  \sg _n\to \sg_n\; ,\quad \sigma \mapsto
  \sigma \circ \tau _{j,i}\; ,
  $$
and the symmetry isomorphism $X_i\wedge Y_j\simeq Y_j\wedge X_i$ in $\bcC$, where $\tau_{j,i}$ permutes the first block of $j$ and the second block of $i$ elements, \cite[Sect. 2.1]{HSS}.

Let $S(T)$ be the free commutative monoid on the symmetric sequence $(\emptyset ,T,\emptyset ,\emptyset ,\dots )$, i.e. the symmetric sequence $S(T)=(\uno,T,T^{\wedge 2},T^{\wedge 3},\dots )$, where $\sg _n$ acts on $T^{\wedge n}$ by permutation of factors (recall that $\emptyset$ is the initial object in $\bcC$). Then $\bcS $ is the category of modules over $S(T)$ in $\bcC ^{\sg }$. In particular, any symmetric spectrum $X$ is a sequence of objects $(X_0, X_1,X_2,\dots)$ in $\bcC $ together with $\sg _n$-equivariant morphisms
  $$
  X_n\wedge T\lra X_{n+1}\; ,
  $$
such that for all $n,i\geq 0$ the composite
  $$
  X_n\wedge T^{\wedge i}\lra X_{n+1}\wedge T^{\wedge(i-1)}\to \dots \to X_{n+i}
  $$
is $\sg _n\times \sg _i$-equivariant. One has a natural closed symmetric monoidal structure on $\bcS$ given by product of modules over the commutative monoid $S(T)$.

For any non-negative $n$ consider the evaluation functor
  $$
  \Ev _n:\bcS \lra \bcC
  $$
sending any symmetric spectrum $X$ to its $n$-slice $X_n$. Each $\Ev _n$ has a left adjoint
  $$
  F_n:\bcC \lra \bcS \; ,
  $$
which can be constructed as follows. First we define a naive functor $\tilde F_n$ from $\bcC $ to $\bcC ^{\Sigma }$ taking any object $A$ in $\bcC $ into the symmetric sequence
  $$
  (\emptyset ,\dots ,\emptyset ,\sg _n\times A,\emptyset ,\emptyset ,\dots )\; ,
  $$
in which $\sg _n\times A$ stays on the $n$-th place. On the second stage we set
  $$
  F_n(A)=\tilde F_n(A)\wedge S(T)\; ,
  $$
see \cite[Def. 7.3]{Hovey2}. Then, for any non-negative integer $m$ one has
  $$
  \Ev _m(F_n(A))=
  \Sigma _m\times _{\Sigma _{m-n}}(A\wedge T^{\wedge (m-n)})\; ,
  $$
where $\Sigma _{m-n}$ is embedded into $\Sigma _m$ by permuting the first $m-n$ elements in the set $\{ 1,\dots ,m\} $.

The functors $F_n$ have the following monoidal property: there is a canonical isomorphism $F_p(A)\wedge F_q(B)\simeq F_{p+q}(A\wedge B)$. The restriction to the $m$-th slice of the symmetry isomorphism $F_p(A)\wedge F_q(B)\simeq F_q(B)\wedge F_p(A)$ is the morphism
  $$
  \Sigma _m\times _{\Sigma _{m-p-q}}(A\wedge B\wedge T^{\wedge (m-p-q)})\to \Sigma _m\times _{\Sigma _{m-p-q}}(B\wedge A\wedge T^{\wedge (m-p-q)})
  $$
which is equal to the product of the right translation
  $$
  \sg _m\to \sg _m\;,\quad \sigma\mapsto \sigma\circ \tau_{q,p}\;,
  $$
the symmetry isomorphism $A\wedge B\simeq B\wedge A$ in $\bcC$, and the identity morphism on $T^{\wedge(m-p-q)}$.

The model structure on $\bcS $ is constructed in two steps -- projective model structure coming from the model structure on $\bcC $ and its subsequent Bousfield localization.

Let $I_T=\cup _{n\geq 0}F_n(I)$ and $J_T=\cup _{n\geq 0}F_n(J)$, where $F_n(I)$ is the set of all morphisms of type $F_n(f)$, $f\in I$, and the same for $F_n(J)$. Let also $W_T$ be the set of projective weak equivalences, where a morphism $f:X\to Y$ is a projective weak equivalence in $\bcS $ if and only if $f_n:X_n\to Y_n$ is a weak equivalence in $\bcC $ for all $n\geq 0$. The projective model structure
  $$
  \bcM =(I_T,J_T,W_T)
  $$
is generated by the set of generating cofibrations $I_T$ and the set of generating weak cofibrations $J_T$. As the model structure in $\bcC $ is left proper and cellular, the projective model structure in $\bcS $ is left proper and cellular too, \cite{Hovey2}. Projective fibrations of spectra are level-wise fibrations. The closed monoidal structure on $\bcS$ is compatible with the model structure $\bcM $.

\begin{remark}
\label{remark-Fn}
{\rm By Remark 7.4. in \cite{Hovey2}, each functor $\Ev _m$ has right adjoint. The above formula for $\Ev _m(F_n(A))$ implies that, given a morphism $f$ in $\bcC $, the morphism $\Ev _m(F_n(f))$ is a coproduct of the product of $f$ with a power of $T$. Since $T$ is cofibrant, $\Ev_m(F_n(f))$ is a (trivial) cofibration provided $f$ is so. This is why $\Ev _m$ sends generating (trivial) cofibrations, in the sense of the model structure $\bcM $, to (trivial) cofibrations in the model category $\bcC $. Applying Lemma 2.1.20 in~\cite{Hovey1}, we see that the functors $\Ev _m$ are left Quillen.}
\end{remark}

Let now
  $$
  \zeta_n^A: F_{n+1}(A\wedge T)\to F_n(A)
  $$
be the adjoint to the morphism
  $$
  A\wedge T\to \Ev_{n+1}(F_n(A))=\Sigma_{n+1}\times (A\wedge T)
  $$
induced by the canonical embedding of $\sg_1$ into $\sg_{n+1}$.
For any set of morphisms $U$ let $\dom (U)$ and $\codom (I)$ be the set of domains and codomains of morphisms from $U$, respectively. Let then
  $$
  S=\{ \zeta ^{A}_n\; \mid \; A\in \dom (I)\cup \codom (I)\; ,\; \; n\geq 0\} \; ,
  $$
where $Q$ is the cofibrant replacement in the projective model structure. Then a stable model structure
  $$
  \bcM _S=(I_T,J_{T,S},W_{T,S})
  $$
in $\bcS $ is defined to be the Bousfield localization of the projective model structure with respect to the set $S$. It is generated by the same set of generating cofibrations $I_T$, and by a new set of generating weak cofibrations $J_{T,S}$. Here $W_{T,S}$ is the set of stable weak equivalences, i.e. new weak equivalences obtained as a result of the localization. The condition of Lemma \ref{locmonoidal} is satisfied and the stable model structure is compatible with the monoidal structure on $\bcS $.

The importance of the stable model structure is that the functor $-\wedge T$ is a Quillen autoequivalence of $\bcS$ with respect to this model structure.

An abstract stable homotopy category, in our understanding, is the homotopy category $\bcT $ of the category of symmetric spectra over a closed symmetric monoidal model category $\bcC$ as above, stabilizing a smash-with-$T$ functor for a cofibrant object $T$ in $\bcC $, i.e. the homotopy category of $\bcS $ with respect to stable weak equivalences $W_{T,S}$.

Notice also that by Hovey's result, see \cite{Hovey2}, the homotopy category $\bcT $ is equivalent to the homotopy category of ordinary $T$-spectra provided the cyclic permutation on $T\wedge T\wedge T$ is left homotopic to the identity morphism.

Now we introduce positive model structures on $\bcS $. Let $I_T^+=\cup _{n>0}F_n(I)$, $J_T^+=\cup _{n>0}F_n(J)$ and let $W_T^+$ be the set of morphisms $f:X\to Y$, such that $f_n:X_n\to Y_n$ is a weak equivalence in $\bcC $ for all $n>0$. We call such morphisms positive projective weak equivalences.

\begin{proposition}
\label{thA}
There is a cofibrantly generated model structure on $\bcS$
  $$
  \bcM ^+=(I_T^+,J_T^+,W_T^+)\;,
  $$
called a positive projective model structure. Positive projective fibrations are level-wise fibrations in positive levels. Positive projective cofibrations are projective cofibrations that are also isomorphisms in the zero level.
\end{proposition}

\begin{pf}
We check that the sets $I_T^+$, $J_T^+$ and $W^+_T$ satisfy the conditions of Theorem 2.1.19 in \cite{Hovey1}, so that they generate a model structure. Condition 1 is satisfied automatically. Conditions 2 and 3 are immediately implied by the inclusions $\hbox{$I_T^+$-cell}\subset\hbox{$I_T$-cell}$, $\hbox{$J_T^+$-cell}\subset\hbox{$J_T$-cell}$ and the fact that $\bcM =(I_T,J_T,W_T)$, whence the sets $I_T$, $J_T$ and $W_T$ satisfy the conditions 2 and 3.

Obviously, all morphisms in $\hbox{$J_T^+$-cell}$ are positive level weak equivalences. To check condition 4 it remains only to show that $\hbox{$J_T^+$-cell}\subset \hbox{$I_T^+$-cof}$. The class $I_T^+$-cof is closed under transfinite compositions and push-outs, see the proof of Lemma 2.1.10 on page 31 in \cite{Hovey1}. Thus, it is enough to show that $J_T^+\subset \hbox{$I_T^+$-cof}$, or, equivalently, that $\hbox{$I^+_T$-inj}\subset \hbox{$J^+_T$-inj}$.
Since the functors $(F_n,\Ev_n)$ are adjoint, we get that
  $$
  \hbox{$J^+_T$-inj}=\{ f:X\to Y \; \, \hbox{in} \; \, \bcS \mid \forall n>0 \; \, \Ev _n(f)\; \, \hbox{is a fibration in}\, \bcC \} \; ,
  $$
i.e. the class $J^+_T$-inj is the class of positive level fibrations in $\bcS $. Similarly,
  $$
  \hbox{$I^+_T$-inj}=\{ f:X\to Y \; \, \hbox{in} \; \, \bcS \mid \forall n>0 \; \, \Ev _n(f)\; \, \hbox{is a trivial fibration in}\, \bcC \} \; .
  $$
It follows that $\hbox{$I^+_T$-inj}\subset \hbox{$J^+_T$-inj}$ and condition 4 is done.
Also, we obtain that $\hbox{$J_T^+$-inj} \, \cap \, W_T^+ = \hbox{$I_T^+$-inj }$, which gives conditions 5 and 6.

The structure of fibrations and cofibrations in $\bcM^+$ can be proved using the definition of $I_T^+$, $J_T^+$, left lifting property and the adjunction between $F_n$ and $\Ev _n$.
\end{pf}

\begin{corollary}
\label{corol:adj}
There is a Quillen adjunction
  $$
  (F_1(T)\wedge-,\iHom(F_1(T),-))
  $$
between $\bcM $ and $\bcM ^+$ and a Quillen adjunction $(\Id ,\Id )$ between $\bcM ^+$ and $\bcM $.
\end{corollary}

Let now
  $$
  S^+=\{ \zeta ^{A}_n\; \mid \; A\in \dom (I)\cup \codom (I)\; ,\; \; n>0\} \; ,
  $$
and let
  $$
  \bcM ^+_{S^+}=(I^+_T,J^+_{T,S^+},W^+_{T,S^+})
  $$
be a localization of the positive projective model structure with respect to the above set $S^+$. We call it a positive stable model structure on $\bcS $. Certainly, we can also localize the positive projective model structure by the set $S$ getting an intermediate model structure $\bcM ^+_S=(I^+_T,J^+_{T,S},W^+_{T,S})$.

\begin{lemma}
\label{prop:monoidalpositive}
With respect to the closed monoidal structure on $\bcS $ the model structure $\bcM ^+$ is an $\bcM $-module and the model structure $\bcM ^+_{S^+}$ is an $\bcM _S$-module. In addition, the closed monoidal structure on $\bcS $ defines an adjunction in two variables with respect to both model structures $\bcM ^+$ and $\bcM ^+_{S^+}$ (see Definition 4.2.1 in \cite{Hovey1}).
\end{lemma}

\begin{pf}
The proof of the facts that $\bcM ^+$ is an $\bcM $-module and that we have an adjunction in two variables with respect to $\bcM ^+$ is similar to the proof of Theorem 8.3 in \cite{Hovey2}. Then we use Lemma \ref{locmonoidal} and Remark \ref{chernika}. Namely, the domains and codomains of morphisms in $I_T$ are of the form $F_n(A)$, $n\ge 0$, where $A$ is a domain or a codomain of a morphism in $I$. Morphisms in $S$ have cofibrant domains and codomains. The analogous is true in the positive setup. Now everything follows from the monoidal properties of the functors $F_n$.
\end{pf}

Notice that the unit axiom is not satisfied for the model structure $\bcM ^+$, thus $\bcS $ is not a closed monoidal model category with respect to $\bcM ^+$. Indeed, let $S(T)^+$ denote the spectrum with $S(T)^+_0=\emptyset$ and $S(T)^+_n=S(T)_n$ for $n>0$. Then the natural morphism $S(T)^+\to S(T)$ is a positive cofibrant replacement for the unit in $\bcS$. However, in general $S(T)^+\wedge X\to X$ is not a positive weak equivalence for a positively cofibrant $X$. For example, if $X=F_n(A)$, $n>0$, then a calculation shows that $(S(T)^+\wedge F_n(A))_m=\emptyset$ for $m\le n$ and $(S(T)^+\wedge F_n(A))_m=(S(T)\wedge F_n(A))_m$ for $m>n$. Thus, the morphism in question fails to be a weak equivalence in level $n$.

\begin{lemma}
\label{lemma:positivestable}
Any positive weak equivalence is a stable weak equivalence.
\end{lemma}

\begin{pf}
Let $f:X\to Y$ be a positive weak equivalence. We claim that for any $Z$ in $\bcS$, there is a canonical bijection
  $$
  \Hom_{Ho(\bcM )}(Z\wedge^L F_1(T),X)=
  \Hom_{Ho(\bcM )}(Z\wedge^L F_1(T),Y)\;.
  $$
For this we use Quillen adjunctions from Corollary \ref{corol:adj} and the fact that $R\iHom (F_1(T),f)$ is an isomorphism in $Ho(\bcM )$ as $f$ is an isomorphism in $Ho(\bcM ^+)$.

Let $g:Y\wedge^L F_1(T)\to X$ be a morphism in $Ho(\bcM )$ that corresponds to the morphism ${\rm id}_Y\wedge^L \zeta^{\uno}_0:Y\wedge^L F_1(T)\to Y$ under the above bijection applied to $Z=Y$ (note that $g$ may be not a class of a morphisms in $\bcC$, which is the reason to consider homotopy categories). Then we obtain a commutative diagram
$$
  \diagram
  X\wedge^L F_1(T)
  \ar[dd]_-{{\rm id}_X\wedge^L \zeta^{\uno}_0} \ar[rr]^-{f\wedge^L \id} & & Y\wedge^L F_1(T) \ar[ddll]_{g}\ar[dd]^-{{\rm id}_Y\wedge^L \zeta^{\uno}_0} \\ \\
  X \ar[rr]^-{f} & & Y
  \enddiagram
$$
The commutativity of the lower triangle is by construction of $g$, while commutativity of the upper triangle is checked by applying $f$ and using the above bijection for the case $Z=X$.
Since ${\rm id}\wedge^L \zeta^{\uno}_0$ is an isomorphism in $Ho(\bcM_S)$, we see that $f$ is also an isomorphism in $Ho(\bcM_S)$ with the inverse being $g\circ ({\rm id}_Y\wedge^L \zeta^{\uno}_0)^{-1}$.
\end{pf}

\begin{theorem}
\label{positive}
In the above terms,
  $$
  W_{T,S}=W^+_{T,S^+}=W^+_{T,S}\; .
  $$
\end{theorem}

\begin{pf}
Let's apply Theorem 3.3.20(1)(a) from \cite{Hirsch} to adjunctions from Corollary \ref{corol:adj}. Indeed, the domains and codomains of morphisms in $S$ and $S^+$ are cofibrant in the corresponding model structures and we have $F_1(T)\wedge S\subset S^+$, $S^+\subset S$, whence the conditions of the above theorem are satisfied. Therefore, we obtain the corresponding Quillen adjunctions between Bousfield localizations $\bcM _S$ and $\bcM ^+_{S^+}$.

We claim that these localized Quillen adjunctions are actually equivalences. More precisely, the functors
  $$
  F_1(T)\wedge^L -:Ho(\bcM_S)\to Ho(\bcM^+_{S^+})\;,
  \quad L\Id:Ho(\bcM^+_{S^+})\to Ho(\bcM_{S})
  $$
are quasiinverse. For this it is enough to show that for any (positively) cofibrant $X$ the natural morphism $F_1(T)\wedge X\to X$ is a (positive) stable weak equivalence. This follows from Lemma \ref{prop:monoidalpositive}, because $F_1(T)\to F_0(\uno)$ is a stable weak equivalence.

Since cofibrant objects in $\bcM ^+_{S^+}$ are the same as in $\bcM ^+$, the equivalence $L\Id :Ho(\bcM ^+_{S^+})\to Ho(\bcM _{S})$ sends an object $X$ in $\bcS $ to $Q^+(X)$, where $Q^+$ is the cofibrant replacement in $\bcM ^+$. Therefore a morphism $f:X\to Y$ in $\bcS $ is in $W^+_{T,S^+}$ if and only if $Q^+(f)$ is in $W_{T,S}$. By Lemma \ref{lemma:positivestable}, the natural morphisms $Q^+(X)\to X$ and $Q^+(Y)\to Y$ are in $W_{T,S}$. Consequently, $Q^+(f)$ is in $W_{T,S}$ if and only if $f$ is in $W_{T,S}$, whence we get $W^+_{T,S^+}=W_{T,S}$. his implies that $(\bcM ^+_{S^+})_S=\bcM ^+_{S^+}$. On the other hand, $(\bcM ^+_{S^+})_S=\bcM ^+_S$, because $S^+\subset S$.
\end{pf}

\begin{corollary}
The monoidal structure on $\bcS $ is compatible with the model structure $\bcM ^+_{S^+}$
\end{corollary}

\begin{pf}
By Theorem \ref{positive}, the morphism $F_1(T)\to F_0(\uno)$ is a cofibrant replacement in $\bcM ^+_{S^+}$. The morphism $F_1(T)\wedge X\to F_0(\uno)\wedge X=X$ is a positive stable weak equivalence for any positively cofibrant $X$ by Lemma \ref{prop:monoidalpositive}.
\end{pf}

\begin{remark}
\label{remark-p}
{\rm For a natural $p$ call a $p$-level weak equivalence (fibration) a morphism in $\bcS$ which is a level weak equivalence (fibration) for $n$-slices with $n\geq p$. These two classes of morphisms define a model structure $\bcM ^{\geq p}$ on $\bcS$. Cofibrations in $\bcM ^{\geq p}$ are cofibrations in $\bcM $ which are isomorphisms on $n$-slices with $n<p$. By methods similar to those used above one shows that any $n$-level weak equivalence is a stable weak equivalence. Moreover, stable weak equivalences are obtained by localization of $\bcM ^{\geq p}$ over the set of morphisms $\{ \zeta ^{A}_n\mid A\in \dom (I)\cup \codom (I)\; ,\; n\geq p\}$.}
\end{remark}

\section{Symmetric powers in stable categories}
\label{rybakit}

Using results from Section~\ref{sectionpositive}, we are now going to show that left derived powers exist and coincide with homotopy symmetric powers for abstract symmetric spectra, see Theorem~\ref{mainresult2} below. This will be applied in Section~\ref{A1homotopy} to the motivic stable homotopy category of schemes over a base.

So, let again $\bcC $ be a closed symmetric monoidal left proper cellular model category, $T$ a cofibrant object in $\bcC $, and $\bcS =\Spt ^{\sg }(\bcC ,T)$ the category of symmetric spectra.
To obtain results for symmetric spectra, similar to Theorem \ref{derived} and Corollary \ref{kuennethtriang}, we would require symmetrizability of generating cofibrations in $\bcS $. However, we can unlikely meet such symmetrizability in applications, see Remark~\ref{remark-nosymm} below. Instead, we will be exploring strong $\Ev _n$-symmetrizability for cofibrations in $\bcS $. The phenomenon of strong $\Ev _n$-symmetrizability was first observed in~\cite{EKMM} for topological spectra. However, our proof for the case of abstract spectra is different from the one in loc.cit., and heavily relies on Theorem~\ref{F-GSW}.

\begin{proposition}
\label{mainresult1}
Let $X$ be an object in $\bcS =\Spt ^{\Sigma }(\bcC ,T)$, cofibrant with respect to the positive projective model structure $\bcM ^+$. Then, for any two positive integers $m$ and $n$, the object $(X^{\wedge n})_m$, as an object of the category $\bcC ^{\Sigma _n}$, is cofibrant in the canonical model structure in $\bcC ^{\Sigma _n}$.
\end{proposition}

\begin{pf}
By Corollary \ref{Fsymcof}, we need only to show that $I_T^+$ is a strongly symmetrizable set of $\Ev _m$-cofibrations for all $m>0$. Let $f_1,\dots ,f_l$
be a finite collection of morphisms in $I$. Recall that $I$ is the set of generating cofibrations in the initial cofibrantly generated category $\bcC $. Let also $p_1,\dots ,p_l$ be a collection of $l$ positive integers. We have to show that the morphism
  $$
  \Ev _m((F_{p_1}f_1)^{\Box n_1}\Box \dots \Box (F_{p_l}f_l)^{\Box n_l})
  $$
is a cofibration in $\bcC ^{\Sigma _{n_1}\times \dots \times \Sigma _{n_l}}$ for any multidegree $\{n_1,\dots ,n_l\}$.

Let $r=n_1p_1+\dots +n_lp_l$, $f=f_1^{\Box n_1}\Box \dots \Box f_l^{\Box n_l}$,
and let $A$ and $B$ be the source and target of the morphism $f$. For any non-negative $i$ the functor $F_i$ commutes with colimits since it is left adjoint. This and the monoidal properties of the functors $F_i$ imply that
  $$
  (F_{p_1}f_1)^{\Box n_1}\Box \dots
  \Box (F_{p_l}f_l)^{\Box {n_l}}=
  F_{n_1p_1+\dots +n_lp_l}(f_1^{\Box n_1}\Box \dots \Box f_l^{\Box n_l})=F_r(f)\; .
  $$
Applying $\Ev _m$ one has
  $$
  \Ev _m(F_r(A))=\Sigma _m\times _{\Sigma _{m-r}}(A\wedge T^{\wedge {(m-r)}})
  $$
and
  $$
  \Ev _m(F_r(B))=\Sigma _m\times _{\Sigma _{m-r}}(B\wedge T^{\wedge {(m-r)}})\; ,
  $$
where the group $\Sigma _{n_1}\times \dots \times \Sigma _{n_l}$ acts on $A$ and $B$ naturally, acts identically on $T^{\wedge(m-r)}$, and it acts by right translations on $\sg_m$ being embedded in it as permutations of the blocks in each of the $l$ clusters of blocks, such that the $i$-th cluster contains $n_i$ blocks of $p_i$ elements each one, for $i=1,\dots ,l$. \label{See D8}

The point here is that this action of the group $\Sigma _{n_1}\times \dots \times \Sigma _{n_l}$ on the set $\{ 1,\dots ,m\} $ induces a free action of the same group on the objects $\Ev _m(F_r(A))$ and $\Ev _m(F_r(B))$ because the (right) action of $\Sigma _{n_1}\times \dots \times \Sigma _{n_l}$ on the right cosets of $\Sigma _{m-r}$ in $\Sigma _m$ is free\footnote{it is essential that all $p_i$ are positive}. It follows that the morphism $\Ev_m(F_r(f))$ in $\bcC ^{\Sigma _{n_1}\times \dots \times \Sigma _{n_l}}$ is isomorphic to a bouquet of several copies of the morphism $(\sg_{n_1}\times\dots\times\sg_{n_l})\times (f\wedge T^{\wedge(m-r)})$. Therefore, $\Ev _m(F_r(f))$ is a cofibration in $\bcC ^{\Sigma _{n_1}\times \dots \times \Sigma _{n_l}}$, as required.
\end{pf}

Let now $\bcD $ be a cofibrantly generated model category and let $G$ be a finite group. Then the functor $Y\mapsto Y/G$ from $\bcD ^G$ to $\bcD $ is left Quillen and it has left derived by Theorem 11.6.8 in \cite{Hirsch}. Given $Y$ in $\bcD ^G$, the homotopy quotient $(Y/G)_h$ is the value of this left derived functor at $Y$. In particular, there is a canonical morphism from $(Y/G)_h$ to $Y/G$, which is a weak equivalence when $Y$ is cofibrant in $\bcD ^G$. If $\bcD $ is in addition simplicial, then the homotopy quotient $(Y/G)_h$ is weak equivalent to the Borel construction $(EG\wedge Y)/G$.

\begin{lemma}
\label{lemma-saynotosimplicial}
Let $Y$ be an object in $\bcS ^G$, such that for any positive integer $m$ the object $Y_m$ is cofibrant in the model structure on $\bcC ^G$. Then the canonical morphism $(Y/G)_h\to Y/G$ is a weak equivalence in $\bcM ^+$.
\end{lemma}

\begin{pf}
Consider the positive projective model structure $\bcM ^+$ on the category $\bcS $ and the induced model structure on $\bcS ^G$. Let $Q^G_+(Y)\to Y$ be the cofibrant replacement in $\bcS ^G$. By Remark~\ref{remark-Fn} and Proposition~\ref{thA}, the functors $\Ev _m$ are left Quillen. Lemma 11.6.4 in \cite{Hirsch} implies that the functors $\Ev ^G_m:\bcS ^G\to \bcC ^G$ are also left Quillen. Therefore, the object $\Ev _m(Q^G_+(Y))=Q^G_+(Y)_m$ is cofibrant in $\bcC ^G$ for all $m$. Combining this with the assumption of the lemma, we see that, for all $m>0$, the canonical morphism ${Q^G_+(Y)}_m/G\to Y_m/G$ is a weak equivalence in $\bcC $. As colimits in spectra are term-wise, the canonical morphism $Q^G_+(Y)/G\to Y/G$ is a positive projective weak equivalence.
\end{pf}

Notice that Lemma~\ref{lemma-saynotosimplicial} is also true for the usual projective model structure $\bcM $, and for more general model structures $\bcM ^{\geq p}$ from Remark~\ref{remark-p}.

Let now $\Sym^n(X)_h$ be the $n$-th homotopy symmetric power of $X$, i.e. the homotopy quotient $(X^{\wedge n}/\Sigma _n)_h$. Combining Proposition~\ref{mainresult1} and Lemma~\ref{lemma-saynotosimplicial}, we obtain the following important result.

\begin{theorem}
\label{mainresult2}
Let $X$ be an object in $\bcS =\Spt ^{\Sigma }(\bcC ,T)$, cofibrant with respect to the positive projective model structure $\bcM ^+$. Then, for any non-negative integer $n$ the natural morphism
  $$
  \theta _{X,n}:\Sym ^n_h(X)\lra \Sym ^n(X)
  $$
is a weak equivalence in $\bcM ^+$. Hence, it is also a stable weak equivalence by Theorem \ref{positive}.
\end{theorem}

\begin{corollary}
\label{hilbre}
Symmetric powers preserve positive projective and stable weak equivalences between positively cofibrant objects in $\bcS $.
\end{corollary}

\begin{pf}
The functors $\Sym ^n_h$, being homotopy quotients, preserve positive projective and stable weak equivalences. Then we apply Theorem~\ref{mainresult2}.
\end{pf}

\begin{corollary}
\label{main3}
Let $\bcT $ be the homotopy category of the category of symmetric spectra $\bcS $. The functors $\Sym ^n:\bcS \to \bcS $ have left derived functors $L\Sym ^n : \bcT \to \bcT $, which are canonically isomorphic to the homotopy symmetric powers $\Sym ^n_h$. Besides, the left deried functors $L\Sym ^n$ give a $\lambda $-structure in $\bcT $, which is canonical in the sense of positive stable model structure on symmetric spectra.
\end{corollary}

\begin{pf}
This is a straightforward consequence of Theorem \ref{mainresult2}, Ken Brown's lemma and the fact that homotopy symmetric powers give rise to K\"unneth towers in distinguished triangles.
\end{pf}

\begin{remark}
\label{remark-nosymm}
{\rm In contrast to level-wise strong symmetrizability asserted by Proposition~\ref{mainresult1}, (positive) cofibrations in $\bcS $ are not symmetrizable in general. Indeed, if $f$ is a cofibration in $\bcC $, then symmetrizability of $F_p(f)$ in $\bcS $, for some $p>0$, is equivalent to strong symmetrizability of $f$ in $\bcC $. Then cofibrations are not symmetrizable for spectra of simiplicial sets by Example~\ref{nostrongex}. Furthermore, by a similar argument as in  Corollaries~\ref{hilbre} and~\ref{main3}, one shows that strong symmetrizability of cofibrations in $\bcC $ implies that left derived symmetric powers exist for $\bcC $ and coincide with the corresponding homotopy symmetric powers. By results from Sections~\ref{intop} and~\ref{A1homotopy}, this gives again that cofibrations are not strongly symmetrizable for (pointed) simplicial sets and, as a consequence, for (pointed) motivic spaces (motivic spaces will be considered in Section \ref{A1homotopy} below).}
\end{remark}

\section{Symmetrizable cofibrations in topology}
\label{intop}

Let us illustrate symmetrizability of (trivial) cofibrations in Kelley spaces and simiplicial sets. Recall that the category $\Top $ of all topological spaces is not a closed symmetric monoidal category, as it does not have an internal Hom in it. The right category is the category of Kelley spaces $\Ke $, see Definition 2.4.21(3) in \cite{Hovey1}. It is a closed symmetric monoidal model category with regard to the monoidal product defined by means of the right adjoint to the embedding of $\Ke $ into $\Top $, see Theorem 2.4.23 and Proposition 4.2.11 in loc.cit. The point here is that the realization functor $|\; \, |$ from $\SSets $ to $\Top $ takes its values in $\Ke $ and, moreover, the it is symmetric monoidal left Quillen, as a functor into $\Ke $, see Proposition 4.2.17, loc.cit. It follows that the category $\Ke $ is simplicial. For any non-negative integer $n$ let $\Delta [n]=\Hom _{\Delta }(-,[n])$ be the $n$-th simplex. If $I_s$ is the set of the canonical inclusions $\partial \Delta [n]\hra \Delta [n]$, $n\geq 0$, and $J_s$ is the set of canonical inclusions $\Lambda _i[n]\hra \Delta [n]$, $n>0$, $0\leq i\leq n$, then $I_s$ and $J_s$ are the sets of generating cofibrations and the set of generating trivial cofibrations for the model structure in $\SSets $. Respectively, the sets $|I_s|=I$ and $|J_s|=J$ cofibrantly generate $\Ke $.

\begin{lemma}
\label{symtop2}
{\rm If $f$ is a weak equivalence in $\SSets $ then $\Sym ^n(f)$ is a weak equivalence in $\SSets $ for any $n\geq 0$.
}
\end{lemma}

\begin{pf}
Let $f:X\to Y$ be a weak equivalence in $\SSets $. Since $|\; \, |$ is a left Quillen functor from $\SSets $ to $\Ke $, all simplicial sets are cofibrant and Kelley spaces are fibrant, $|f|$ is a weak equivalence between fibrant-cofibrant objects in $\Ke $. Then $|f|$ is a left homotopy equivalence in the simplicial closed symmetric monoidal model category $\Ke $. Applying Lemma \ref{Key Lemma}, we obtain that $\Sym ^n(|f|)$ is a weak equivalence in $\Ke $ for all $n\geq 0$. Since $|\; \, |$ is monoidal and left adjoint, we have that $\Sym ^n(|f|)$ is the same morphism as $|\Sym ^n(f)|$.
\end{pf}

\begin{proposition}
\label{symtop1}
All (trivial) cofibrations in $\SSets $, and all (trivial) cofibrations in $\SSets _*$ are symmetrizable.
\end{proposition}

\begin{pf}
By Lemma \ref{pointed}, it is enough to prove the proposition in the unpointed case only. For the set of all cofibrations, since the monoidal product and colimits in $\SSets $ are level-wise, it is enough to prove a similar proposition in the category of sets, where cofibrations are injections. This is an easy exercise. For the set of all trivial cofibrations, we apply Lemma \ref{symtop2} together with Corollary \ref{moscowdust}.
\end{pf}

\begin{proposition}
\label{symtop3}
All (trivial) cofibrations in $\Ke $, and all (trivial) cofibrations in $\Ke _*$ are symmetrizable.
\end{proposition}

\begin{pf}
Since $|I_s|=I$, $|J_s|=J$, and $|\;\,|$ is a symmetric monoidal functor commuting with colimits, we see that by Proposition~\ref{symtop1}, $I$ and $J$ are symmetrizable. Thus we conclude by Corollary~\ref{cofad}.
\end{pf}

Since the sets of cofibrations and trivial cofibrations in $\SSets $, $\SSets _*$, $\Ke$, and $\Ke_*$ are symmetrizable, we can apply Theorem \ref{derived} getting $\lambda $-structures of left derived symmetric powers in the corresponding unstable homotopy categories. In the stable setting, when $\bcS =\Spt ^{\Sigma }(\bcC ,T)$, the category $\bcC $ is the category $\SSets _*$ of pointed simplicial sets and $T$ is the simplicial circle $S^1$, i.e. the coequalizer of the two boundary morphisms $\Delta [0]\rightrightarrows \Delta [1]$, then Theorem \ref{mainresult2} and Corollary \ref{hilbre} specialize to the results \cite{EKMM}, III, 5.1, and \cite{MMSS}, 15.5. Corollary \ref{main3} yields the $\lambda $-structure of left derived symmetric powers in the topological stable homotopy category.

\section{Symmetrizable cofibrations in $\AF ^1$-homotopy theory of schemes}
\label{A1homotopy}

Now we are going to apply the main results of the paper to the Morel-Voevodsky homotopy theory of schemes over a base and prove the existence of $\lambda $-structures of left derived symmetric powers in both unstable and stable settings of that theory.

So, let $B$ be a Noetherian separated scheme of finite Krull dimension, $\Sm /B$ the category of smooth schemes of finite type over $B$, and let $Pre(\Sm /B)$ be the category of presheaves of sets on $\Sm /B$, i.e. contravariant functors from $\Sm /B$ to $\Sets $. Let $\bcC $ be the category $\deop Pre(\Sm /B)$ of simplicial presheaves over $B$. Sometimes it is convenient to think of $\bcC $ as the category $Pre(\Sm /B\times \Delta )$ of presheaves of sets on the Cartesian product of two categories $\Sm /B$ and $\Delta $. If $X$ is a smooth scheme over the base $B$, let $\Delta _X[n]$ be a presheaf on $\Sm/B\times \Delta $ sending any pair $(U,[m])$ to the Cartesian product of sets $\Hom _{\Sm /B}(U,X)\times \Hom _{\Delta }([m],[n])$. Then we get a fully faithful embedding of the category of smooth schemes over $B$ into the category of simplicial presheaves, $h:\Sm /B\to \bcC $, sending $X$ to the presheaf $\Delta _X[0]$ represented by $X$, and similarly on morphisms. If $K$ is a simplicial set, i.e. a presheaf of sets on the simplicial category $\Delta $, then it induces another presheaf on $\Sm/B\times \Delta $ by ignoring schemes and sending a pair $(U,m)$ to the value $K_m$ of the functor $K$ on the object $[m]$ in $\Delta $. This gives a functor $\SSets \to \bcC $, which provides a simplicial structure on the category $\bcC $. The symmetric monoidal structure in $\bcC $ is defined section-wise, i.e. for any two simplicial presheaves $X$ and $Y$ the value of their product on $(U,[m])$ is the Cartesian product of the values of $X$ and $Y$ on $(U,[m])$.

Following Jardine, \cite{Jardine1}, we say that a morphism  $f:X\to Y$ in $\bcC $ is a weak equivalence if $f$ induces weak equivalences on stalks of the presheaves $X$ and $Y$, where stalks are taken in the sense of Nisnevich or \'etale topology on the category $\Sm /B$. Let $W$ be the class of all weak equivalences in $\bcC $. Notice that, in spite of that $\bcC $ is a category of simplicial presheaves, the topology is needed to define weak equivalences in $\bcC $ as we use stalks. Let also $I$ be the set of monomorphisms of type $X\hookrightarrow \Delta _U[n]$ for some simplicial presheaf $X$, smooth $B$-scheme $U$ and $n\ge 0$. Fix a cardinal $\beta >2^{\alpha }$, where $\alpha $ is the cardinality of the morphisms in $\Sm /B$. Let $J$ be the set of monomorphisms $X\to Y$, which are weak equivalences and such that the cardinal of the set of $n$-simplices in $Y$ is less than $\beta $ for all $n$. One can show that the class $I$-cell consists of all section-wise monomorphisms of simplicial presheaves. Then $\bcC $ together with the above defined weak equivalences and monomorphisms taken as cofibrations is a simplicial left proper and cellular closed symmetric monoidal model category cofibrantly generated by the set of generating cofibrations $I$ and the set of generating trivial cofibrations $J$. Actually, this is a consequence of a more general result on model structures for simplicial presheaves on a site due to Jardine, see \cite{Jardine1}. Such constructed model structure $\bcM =(I,J,W)$ is called injective model structure in $\bcC $.

As well as in Example \ref{A1interval}, denote by $\AF ^1$ the simplicial motivic space represented by the affine line $\AF ^1_B$ over the base scheme $B$. Then $\AF ^1\to \uno $ is a diagonalizable interval, with the multiplication coming from the multiplication in the fibres of the structural morphism from $\AF ^1_B$ to $B$. The above injective model structure and the set of morphisms $S=\{ X\wedge \AF ^1\stackrel{\id \wedge \pi }{\lra }X\; |\; X\in \dom(I)\cup \codom (I)\} $ satisfy the assumptions of the localization theorem in \cite{Hirsch}. The corresponding left localized model structure $\bcM _{\AF ^1}=(I,J_{\AF ^1},W_{\AF ^1})$ is one of the motivic model structures on $\bcC $, and the corresponding localization $\bcC _{\AF ^1}$ is again a simplicial left proper cellular closed symmetric monoidal model category cofibrantly generated by the same set of generating cofibrations $I$ and the new localized set of generating trivial cofibrations $J_{\AF ^1}$. The category $\bcC _{\AF ^1}$ is called the unstable motivic model category of schemes over the base $B$. Its homotopy category $Ho(\bcC _{\AF ^1})$ is nothing but the unstable motivic homotopy category of schemes over $B$, which we denote by $\MH (B)$.

The following result is the precise statement of Theorem A mentioned in Introduction.

\begin{theorem}
\label{Teorema A}
Let $B$ be a Noetherian scheme of finite Krull dimension, and let $\bcC _{\AF ^1}$ be the unstable motivic model category of schemes over $B$. Then all symmetric powers $\Sym ^n$ preserve weak equivalences in $\bcC _{\AF ^1}$, and the corresponding left derived functors $\LSym ^n$ yield a $\lambda $-structure in $\MH (B)$.
\end{theorem}

\begin{pf}
Since cofibrations in $\bcC $ are coming section-wise from cofibrations simplicial sets, all objects are cofibrant in $\bcC $. By the same reason, and by Proposition \ref{symtop1}, we also have that all cofibrations in $\bcC $ are symmetrizable. The class of trivial cofibrations is symmetrizable too. Indeed, let $f:X\to Y$ be a trivial cofibration $\bcC $. Since stalks of presheaves are colimits commuting with symmetric powers, the morphism $(\Sym ^n(f))_P$ on stalks at a point $P$ is nothing but the $n$-th symmetric power $\Sym ^n(f_P)$ of the morphism $f_P$ induced by $f$ at $P$. So $(\Sym ^n(f))_P$ is a weak equivalence of simplicial sets by Proposition \ref{symtop1}. Since, moreover, $\AF ^1\to \uno $ is a diagonalizable interval and all objects are cofibrant in $\bcC $, we conclude by Theorem \ref{locdiagint} and Theorem \ref{derived}.
\end{pf}

\begin{remark}
{\rm
Theorem \ref{Teorema A} holds true also in the pointed setting by Lemma \ref{pointed}.
}
\end{remark}

Let now $T$ be the motivic $(1,1)$-sphere. Recall that $T$ is the $\wedge $-product of the simplicial circle, i.e. the coequalizer of the two morphisms from $\Delta [0]$ to $\Delta [1]$, and the algebraic group $\Gm $ over $B$ in the pointed category $\bcC _*$. The corresponding category of symmetric spectra $\bcS =\Spt ^{\sg }((\bcC _{\AF ^1})_*,T)$, together with the corresponding stable model structure, is the category of motivic symmetric spectra over the base scheme $B$, and the homotopy category of $\bcS $, with regard to the stable model structure, is nothing but the Morel-Voevodsky motivic stable homotopy category over $B$, see \cite{VoevBerlin} and \cite{Jardine2}. We will denote it by $\MSH (B)$.

The category $\bcS =\Spt ^{\sg }((\bcC _{\AF ^1})_*,T)$ of motivic symmetric spectra has a structure of a simplicial closed symmetric monoidal model category by Hovey's result, \cite{Hovey2}. Moreover, the simplicial suspension $\sg _{S^1}$ induces an autoequivalence in its homotopy category $\MSH (B)$, so that it is a triangulated category (use Section 6.5 in \cite{Hovey1}). Then we see that the results in Proposition \ref{mainresult1}, Theorem \ref{mainresult2}, Corollary \ref{hilbre} and Corollary \ref{main3} hold true for symmetric spectra of simplicial sets and for motivic symmetric spectra uniformly. In other words, we have the following result (Theorem B in Introduction).

\begin{theorem}
\label{Teorema B}
Let $B$ be a Noetherian scheme of finite Krull dimension, and let $T=S^1\wedge \Gm $ be the motivic sphere. Symmetric powers preserve stable weak equivalences between positively cofibrant objects in the category $\Spt ^{\sg }((\bcC _{\AF ^1})_*,T)$ of motivic symmetric spectra over the base $B$. The corresponding left derived symmetric powers $L\Sym ^n$ exist, they are canonically isomorphic to homotopy symmetric powers and give rise to a $\lambda $-structure in $\MSH (B)$.
\end{theorem}

The category $\MSH (B)$, being triangulated, can be $\QQ $-localized getting the $\QQ $-linear triangulated symmetric monoidal category $\MSH (B)_{\QQ }$. Hirschhorn's localization allows to make symmetric spectra into a $\QQ $-linear stable model category, see Definition 3.2.14 in \cite{CD}. One can show that the $\lambda $-structure from Theorem \ref{Teorema B} induces the $\lambda $-structure of symmetric powers with $\QQ $-coefficients defined via idempotents in endomorphism rings, see 3.3.20 in loc.cit. The latest $\lambda $-structure coincides with the system of towers constructed in \cite{triangles}. If now $\MSH (B)_{\QQ }^{\rm c}$ is the full subcategory of compact objects in $\MSH (B)_{\QQ }$, the $\lambda $-structure of $\QQ $-local left derived symmetric powers induces the $\lambda $-structure in the $K$-theory of the triangulated category $\MSH (B)_{\QQ }^{\rm c}$ considered in \cite{zeta}.

\section{Appendix: categorical v.s. geometrical symmetric powers}

One of the numerous differences between the topological homotopy theory and the motivic one is that motivic spaces or spectra are associated with the geometric reality of deeper level. In particular, this leads to the following important phenomenon. For simplicity, let $B$ be the spectrum of a field $k$. Recall that $h$ is the Yoneda type functor from the category $\Sch /k$ of all separated schemes of finite type over $k$ into the category $\bcC =\deop Pre(\Sm /k)$ of simplicial presheaves on the \'etale or Nisnevich site $\Sm /k$. The problem is that, although the functor $h$ obviously commutes with products, it does not commute with colimits. More precisely, let $X$ be a separated scheme of finite type over $k$, such that any finite subset is contained in an affine open subscheme in $X$. Under this assumption the $n$-th symmetric power $\Sym ^n(X)$ exists as an object in $\Sch /k$. Then the $n$-th symmetric power $\Sym ^n(h_X)$ of the motivic space $h_X$, represented by $X$, is not the same as the motivic space $h_{\Sym ^n(X)}$, represented by the symmetric power $\Sym ^n(X)$ of the scheme $X$ over $k$. The comparison of these two objects is a question of critical importance, since its understanding would provide the geometrical meaning to our categorical approach to symmetric powers in the $\AF ^1$-homotopy setup.

If $\Sets $ is the category of sets with discrete topology, the category of presheaves on the site $\Sets $ has one stalk only. This is why, if $X$ is a set and $G$ a finite group acting on $X$, it is easy to show that the canonical map from $h_X/G$ to $h_{X/G}$ is an isomorphism. Working with the $\QQ $-linear motivic symmetric spectra over a base, the homotopy type of $\Sym ^n(h_X)$ will be the same as of $h_{\Sym ^n(X)}$, due to Jardine's transfers, see \cite{Jardine3}. Below we consider some examples, which suggest what exactly the difference between two homotopy types might depend on.

Let again $\bcC $ be the category of simplicial presheaves on $\Sm /k$ with the model structure given by the \'etale topology on schemes. Let $G$ be a finite group acting on a separated scheme $X$ of finite type over $k$. Suppose $X$ can be covered by $G$-invariant affine open subschemes, so that the quotient $X/G$ exists in $\Sch /k$. Let $\alpha :h_X/G\to h_{X/G}$ be the canonical morphism as above. In the case of symmetric powers, $X$ must be the $n$-th power of a scheme and $G$ the symmetric group $\Sigma _n$ acting by permuting factors in $X$. We address the question whether $\alpha $ is a weak equivalence in the model category $\bcC $.

\begin{proposition}
\label{comparison}
Assume that $G$ acts freely on $X$, i.e. the morphism $\pi :X\to X/G$ is \'etale. Then the canonical morphism $\alpha :h_X/G\to h_{X/G}$ is a weak equivalence in~$\bcC $.
\end{proposition}

\begin{pf}
To prove the proposition it is enough to show that $\alpha $ induces an isomorphism on neighbourhoods of points, i.e. on spectra of strictly Henselian rings. So, let $R$ be a strictly Henselian local ring, $\mg $ be the maximal ideal in it and $l=R/\mg $ be the corresponding residue field. All we need is to show that the canonical morphism of sets $\alpha _R:X(R)/G\to (X/G)(R)$ is an isomorphism. Let $\bcA _R$ be the category of \'etale algebras over $R$ and let $\bcA _l$ be the category of \'etale algebras over $l$. As $R$ is Henselian the residue homomorphism $R\to l$ induces an equivalence of categories $\Psi :\bcA _R\to \bcA _l$. Let $f:\Spec (R)\to X/G$ be an element in $(X/G)(R)$. Its preimage, under the morphism $\pi $, is a set of $R$-points of the \'etale $R$-algebra $S$, where $\Spec (S)\to X$ is the pull-back of $f$ with respect to the morphism $\pi :X\to X/G$. Let also $\bar f$ be the precomposition of $f$ with the morphism $\Spec (l)\to \Spec (R)$, and let $\Spec (L)\to X$ be the pull-back of the precomposed morphism $\bar f$ with respect to $\pi $. As $\Psi $ is an equivalence, we have that $\alpha ^{-1}_R(f)=\alpha ^{-1}_l(\bar f)$, i.e. $\alpha ^{-1}_R(f)$ is bijective to $l$-points of the \'etale $l$-algebra $L$. Since $R$ is strictly Henselian, the residue field $l$ is separably closed, whence $L$ is isomorphic to the product of $n$ copies of the field $l$, where $n$ is the order of the finite group $G$, and $G$ acts freely on $\Spec (L)$. Then the quotient of the set of all $l$-points of $X$ by $G$ is identified with $l$-points of $X/G$. Therefore the quotient of the set of all $R$-points of $X$ by $G$ is identified with $R$-points of $X/G$, whence $\alpha _R$ is a bijection.
\end{pf}

The same argument shows that the morphism $\alpha$ is a monomorphism section-wise in the case of the Nisnevich topology on schemes.

\medskip

\bigskip

\begin{small}

\end{small}

\vspace{4mm}

\begin{small}

{\sc Steklov Mathematical Institute, Gubkina str. 8,
119991, Moscow, Russia} {\footnotesize {\it $\; $ E-mail address}: {\tt gorchins@mi.ras.ru}}

\end{small}

\bigskip

\begin{small}

{\sc Department of Mathematical Sciences, University of Liverpool, Peach Street, Liverpool L69 7ZL, England, UK}

\end{small}

\begin{footnotesize}

{\it E-mail address}: {\tt vladimir.guletskii@liverpool.ac.uk}

\end{footnotesize}

\end{document}